\theoremstyle{plain}
\title[LR]{A Littlewood-Richardson rule for the MacDonald inner product 
and bimodules over wreath products.}
\author{Erik Carlsson}
\author{Anthony M. Licata}
\theoremstyle{plain}
\newtheorem{theorem}{Theorem}
\newtheorem{lemma}{Lemma}
\newtheorem{proposition}{Proposition}
\theoremstyle{definition}
\newcommand{\Z}{\ensuremath{\mathbb{Z}}}
\newcommand{\C}{\ensuremath{\mathbb{C}}}
\newcommand{\cF}{\ensuremath{\mathcal{F}}}
\newcommand{\Hom}{{\rm Hom}}
\newcommand{\sltwo}{\ensuremath{\mathfrak{sl}_2}}
\newcommand{\lang}{\left\langle}
\newcommand{\rang}{\right\rangle}
\newcommand{\zee}{\mathfrak{z}}
\newcommand{\ka}{\kappa}
\newcommand{\la}{\lambda}
\newcommand{\kyb}{\small \begin{Young}}
\newcommand{\kye}{\end{Young} \normalsize}
\DeclareMathOperator{\End}{End}
\DeclareMathOperator{\aut}{aut}
\DeclareMathOperator{\Tab}{Tab}
\newtheorem*{thma}{Theorem A}
\newtheorem*{thmb}{Theorem B}
\newtheorem{ex}{Example}
\definecolor{chalkboard}{rgb}{0,.1,.1}
\begin{document}

\bibliographystyle{plain}
\maketitle

\begin{abstract}
We prove a Littlewood-Richardson type formula for
$(s_{\la/\mu},s_{\nu/\ka})_{t^k,t}$, the pairing 
of two skew Schur functions in the
MacDonald inner product at $q=t^k$ for positive integers $k$. 
This pairing counts graded decomposition numbers in 
the representation theory of wreath products of the algebra $\C[x]/x^k$ and symmetric groups.
\end{abstract}

\section{Introduction}

Let $\Lambda$ denote the algebra of symmetric functions, endowed with the standard bilinear form with respect to which the Schur basis $\{s_\lambda\}$ is orthonormal.  The Littlewood-Richardson rule gives an enumerative formula for
the inner products
$$
	c^{\lambda}_{\mu\nu} = \langle s_\nu^*s_\lambda, s_\mu \rangle,
$$
which are known as Littlewood-Richardson coefficients.  Here $s_\nu^*$ is the linear  operator on symmetric functions adjoint to multiplication by the Schur function $s_\nu$.  (We refer to \cite{F,Mac} for detailed treatments of the Littlewood-Richardson rule).  
The $c^{\la}_{\mu\nu}$ are non-negative integers, and they enumerate tableaux satisfying certain conditions.
The integrality of the Littlewood-Richardson coefficients is also manifest in their appearance as tensor product multiplicities in the representation theory of symmetric groups and as intersection numbers in the Schubert calculus of Grassmannians.
A mild generalization of the Littlewood-Richardson rule adds a fourth partition to the picture: consider the 
algebra $\mathcal{H}_\Lambda$ of operators on symmetric functions spanned by the operators $\{s_\mu s_{\kappa}^*\}_{\mu,\kappa}$.
Define $c^{\kappa\lambda}_{\mu\nu}$ to be structure constants in the expansion
\begin{equation}
\label{zel}
s_\nu^*s_\lambda = \sum_{\mu,\kappa} c^{\kappa\lambda}_{\mu\nu} 
s_\mu s_{\kappa}^*.
\end{equation}
For $\kappa = \emptyset$ these are the ordinary 
Littlewood-Richardson coefficients; an enumerative formula for the general case
was found by Zelevinsky \cite{Z} in the language of \emph{pictures}.
The algebra $H_\Lambda$ is a Hopf algebra; in fact it is the Heisenberg double of the Hopf algebra $\Lambda$.  Thus the Littlewood-Richardson coefficients may also be thought of as structure constants in the canonical basis of the Hopf algebra $H_\Lambda$.


Let $\Lambda_{q,t}$ denote the algebra of symmetric polynomials
over the two-variable coefficient ring $\C(q,t)$,
together with the MacDonald inner product $(\,,)_{q,t}$,
which specializes to the standard inner product at $q=t$. 
We will be interested in the specialization $\Lambda_{t^k,t}$
for some integer $k\geq 1$.  The ring $\Lambda_{t^k,t}$ appears in several other mathematical
contexts, including the representation theory of quantum affine algebras.  In particular, several important bases of 
$\Lambda_{t^k,t}$, such as the Schur basis, 
should be related to important bases in the representation theory 
of quantum affine algebras and in the geometry of quiver varieties of affine type.  As a result, it is natural 
to suspect that much of the positive integral structure appearing the 
ordinary theory of symmetric functions will admit an interesting 
generalization from $\Lambda$ to $\Lambda_{t^k,t}$.  The first goal of the present paper is to extend \eqref{zel} to the ring $\Lambda_{t^k,t}$,
in which the dual is now taken with respect to $(\,,)_{t^k,t}$ instead of
the standard inner product.  As the inner product on  $\Lambda_{t^k,t}$ takes values in the ring $\C(t)$,
the precise statement involves a $t$-weighted count of tableaux.  In Section \ref{combsection}, we define $k$-tableaux, which are fillings of a Young diagram with entries which are monomials of the form $at^m$, $0\leq m \leq k$.  A $k$-tableau $T$ has an associated statistic $c(T)$, the degree of the product of all its entries.
Our main theorem then states:
\begin{thma}\label{thm:A}
We have 
\[s^*_\nu s_\la = \sum_{\mu,\ka} c^{\ka\la}_{\mu\nu}(t) s_\mu s_\ka^*,\quad
c^{\ka\la}_{\mu\nu}(t) = \sum_{T} c(T).\]
\end{thma}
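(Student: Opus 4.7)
My plan is first to reduce the operator identity to a pairing identity for skew Schur functions, and then to prove that identity by computing the pairing through a representation-theoretic model.

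By a standard adjoint manipulation (parallel to how $c^{\ka\la}_{\mu\nu} = \lang s_{\la/\mu}, s_{\nu/\ka}\rang$ in Zelevinsky's classical formula \eqref{zel}), the coefficients $c^{\ka\la}_{\mu\nu}(t)$ appearing in the expansion of $s_\nu^* s_\la$ in the basis $\{s_\mu s_\ka^*\}$ should be identifiable with the MacDonald pairings
$$c^{\ka\la}_{\mu\nu}(t) = (s_{\la/\mu}, s_{\nu/\ka})_{t^k,t}.$$
This reduction is purely formal, relying only on the adjointness of $s_\nu^*$ under $(\cdot,\cdot)_{t^k,t}$ and on the linear independence of the operators $\{s_\mu s_\ka^*\}$ in the Heisenberg-double structure on $\Lambda_{t^k,t}$ (which I would establish as a preliminary). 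After this step, Theorem A is equivalent to the combinatorial pairing formula $(s_{\la/\mu}, s_{\nu/\ka})_{t^k,t} = \sum_T c(T)$ highlighted in the abstract.

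To evaluate the pairing, I would exploit the interpretation of $\Lambda_{t^k,t}$ as the graded Grothendieck group of representations of the wreath products $S_n \wr \C[x]/x^k$ suggested by the paper's title. Under such a graded Frobenius-type isomorphism, multiplication by $s_\la$ and adjoint action by $s_\nu^*$ should realize induction and restriction bimodules between these representation categories, and $(\cdot,\cdot)_{t^k,t}$ should compute a graded Euler form. The pairing $(s_{\la/\mu}, s_{\nu/\ka})_{t^k,t}$ then becomes the graded dimension of a specific bimodule---essentially a skew variant of a Mackey piece of $\mathrm{Ind}\circ\mathrm{Res}$---which I would exhibit with an explicit basis parametrized by $k$-tableaux, each basis vector contributing its graded weight $c(T)$.

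The main obstacle is the combinatorial step: constructing this $k$-tableau basis and verifying that the $t$-grading of each basis element agrees with the monomial statistic $c(T)$. The classical Littlewood-Richardson rule is established via a bijection (jeu de taquin, RSK, or Zelevinsky's pictures), but here that bijection must be refined to track the nilpotent generator $x \in \C[x]/x^k$. Concretely, I expect to equip a standard tableau basis of the relevant wreath-product bimodule with an $x$-filtration, identify the associated graded pieces with $k$-tableaux, and check that the filtration degree of $T$ is exactly $c(T)$. The positivity of the resulting coefficients $c^{\ka\la}_{\mu\nu}(t) \in \mathbb{N}[t]$ then follows automatically from this module-theoretic model.
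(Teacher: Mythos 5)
Your first step is exactly the paper's Lemma \ref{skewlemma}: the reduction of the operator identity to $c^{\ka\la}_{\mu\nu}(t) = (s_{\la/\mu},s_{\nu/\ka})_{t^k,t}$ is carried out there by checking the relation in the power-sum basis and changing basis to Schur functions. So that part of your plan is sound and matches the paper.

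The second step, however, leaves the actual content of the theorem unproved, and as organized it risks circularity. The paper's proof of Theorem A is entirely combinatorial: the key input is Lemma \ref{nl}, which computes $(s_{\la/\mu},h_\nu)_{t^k,t}$ as a sum over $\Tab_k(\la-\mu,\nu)$ by expanding $g_m = \rho_{t^k,t}(h_m)$ explicitly in the $h$-basis (the coefficients $a_\pi$ of \eqref{amu}, obtained from the plethystic substitution $h_\mu(x;tx;t^2x;\dots)$) and then inducting on the length of $\nu$ via the Pieri rule; the lattice-word condition on $w(\ka)w(T^0)w(T^1)\cdots$ is then extracted by inverting the Kostka matrix and reducing to $k=1$ through the filtration $F$ and the disconnected diagram $\la_F$, where the classical Littlewood--Richardson rule and Zelevinsky's theorem are invoked. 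Your proposal replaces all of this with the hope of an ``explicit basis parametrized by $k$-tableaux'' of a Mackey-type bimodule, but constructing that basis and verifying that its grading matches $c(T)$ \emph{is} the theorem; the categorification by itself only yields that the pairing is a non-negative (suitably symmetrized) Laurent polynomial, not the enumerative formula. Moreover, in the paper the isometry (Theorem \ref{categorification}) is itself proved using Example \ref{strip}, which is a consequence of Theorem A, so you would need to establish the commutation relation $q^{(n)}p^{(m)} = \sum_l \binom{k+l-1}{l}_t\, p^{(m-l)}q^{(n-l)}$ on the symmetric-function side independently before using the module categories. Finally, note that the form categorified by $\Hom$ is the renormalized semi-linear form \eqref{hermsp} (with $t\mapsto t^2$ and a shift $t^{(1-k)d}$), not $(\,,)_{t^k,t}$ itself, so even granting the isometry you would recover $C^{\ka\la}_{\mu\nu}(t)$ rather than $c^{\ka\la}_{\mu\nu}(t)$ without an extra translation.
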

\noindent
Here $c(T)$ is the $t$-degree of the tableau $T$, and $T$ ranges over $k$-tableaux.
When $k=1$ and $\ka$ is the empty partition, the above statement reduces to the usual 
Littlewood-Richardson rule. The proof of this theorem is given in Section 
\ref{combsection}, which deals only with combinatorics. We also show in 
proposition \ref{prop:unimod} that these coefficients are symmetric and 
unimodal, something that is not obvious from enumerative description of 
theorem A.

In Section \ref{catsection} we identify an integral form $\Lambda_{t^k,t}^\Z$ of
$\Lambda_{t^k,t}$ with the Grothendieck
group of graded projective modules over an $S_n$-equivariant
graded ring. 
Under this identification,
the Schur polynomial $s_\lambda$ corresponds to certain a indecomposable projective module $S_\lambda$,  
and $(s_\mu,s_\nu)_{t^k,t}$ measures the graded dimensions of
the $\Hom(S_\mu,S_\lambda)$ up to a grading shift. In particular, this implies
that this graded dimension is a Laurent polynomial in $t$ with nonnegative 
integer coefficients, explaning the positive-integral structure of the 
generalised Littlewood-Richardson coefficients $c^{\ka\la}_{\mu\nu}(t)$.
Specifically, let $A_k = \C[x]/x^k$, and let
$A_k^{[n]}$ denote the smash product of $A_k$ 
with $\C[S_n]$.  The algebras $A_k^{[n]}$ are graded, and we 
consider the Grothendieck group $K(A_k^{[n]}-\mbox{gmod})$ of 
finitely generated projective $A_k^{[n]}$ modules.  This space 
is a free $\Z[t,t^{-1}]$ module, where multiplication by $t$ 
corresponds to a shift in the grading.  Since the $\Hom$ pairing in the category of graded $A_k^{[n]}$ modules induces a semi-linear pairing on 
the Grothendieck group, we slightly modify the bilinear form on
$\Lambda_{t^k,t}^\Z$ to be semi-linear in $t$.  Our second main theorem is then the following.

\begin{thmb}
There is an isometric isomorphism
\[\Phi:  \bigoplus_{n=0}^\infty K(A_k^{[n]}-\mbox{gmod}) 
\longrightarrow \Lambda_{t^k,t}^\Z,\]
where the bilinear form on the left hand side is induced 
from the $\Hom$ bifunctor.
\end{thmb}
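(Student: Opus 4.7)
The plan is to define $\Phi$ on graded projective indecomposables and then prove the isometry via compatibility with the natural Hopf algebra structures on both sides. Since $A_k$ is graded local with residue field $\C$, the graded indecomposable projective $A_k^{[n]}$-modules, up to grading shift, are parametrized by partitions of $n$: they may be realized as $P_\la := A_k^{[n]} \cdot e_\la$, where $e_\la \in \C[S_n] \subset A_k^{[n]}$ is a primitive idempotent projecting onto the Specht module $S^\la$. Since $\{[P_\la]\}_{\la \vdash n}$ is a $\Z[t,t^{-1}]$-basis of $K(A_k^{[n]}\text{-gmod})$ and $\{s_\la\}_{\la \vdash n}$ is a $\Z[t,t^{-1}]$-basis of the weight-$n$ part of $\Lambda_{t^k,t}^\Z$, the assignment $\Phi([P_\la]) = s_\la$ immediately yields a $\Z[t,t^{-1}]$-module isomorphism, and all that remains is to verify the isometry.

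To do this, I equip $V := \bigoplus_n K(A_k^{[n]}\text{-gmod})$ with the Hopf algebra structure coming from induction and restriction along the natural inclusions $A_k^{[m]} \otimes A_k^{[n]} \hookrightarrow A_k^{[m+n]}$. Graded Frobenius reciprocity identifies induction as the adjoint of restriction, which in turn makes the graded $\Hom$ pairing Hall-type self-dual for this Hopf structure. The semi-linearly modified MacDonald inner product is self-dual for the Hopf structure on $\Lambda_{t^k,t}^\Z$ by construction: multiplication by $f$ is adjoint to $f^*$. Thus, once $\Phi$ is shown to be a Hopf algebra map, the isometry reduces to checking equality of pairings on algebra generators. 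Both sides are generated in weight one, where $K(A_k\text{-gmod})$ is free of rank one on $[A_k]$, sent to $s_1 = p_1$, and one computes
\[ \dim_t \Hom_{A_k}(A_k, A_k) = \dim_t A_k = 1 + t + \cdots + t^{k-1} = (p_1, p_1)_{t^k,t}, \]
verifying the isometry on the generator.

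The main obstacle is to show that $\Phi$ is in fact a Hopf algebra map --- equivalently, that induction categorifies multiplication of Schur functions in $\Lambda_{t^k,t}^\Z$:
\[ \bigl[\,\text{Ind}_{A_k^{[m]} \otimes A_k^{[n]}}^{A_k^{[m+n]}}(P_\mu \boxtimes P_\nu)\,\bigr] \longleftrightarrow s_\mu s_\nu, \]
with the correct grading shifts, and symmetrically for restriction against the coproduct dual to $s^*$. For $k=1$ this is the classical Littlewood--Richardson rule. For general $k$, the ``extra'' structure contributed by the $A_k^{\otimes n}$-part should produce precisely the $t^k$-deformation, since $A_k$ has Hilbert series $[k]_t$, and its graded tensor powers generate the combinatorial weighting that appears in Theorem A. Carefully tracking these grading shifts so that the Schur-function product in $\Lambda_{t^k,t}^\Z$ matches the induction class up to the correct power of $t$ is the most delicate step; Theorem A will likely play an essential role here, providing the combinatorial input needed to identify the induction structure constants with the $c^{\ka\la}_{\mu\nu}(t)$.
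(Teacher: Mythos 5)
Your setup---sending the class of the indecomposable projective $[A_k^{[n]}e_\lambda]$ to $s_\lambda$ and then trying to verify the isometry via the adjunction between induction and restriction---begins the same way as the paper, but the reduction you propose breaks down at its key step. The claim that, once both forms are Hopf pairings and $\Phi$ is a Hopf map, ``the isometry reduces to checking equality of pairings on algebra generators \ldots\ both sides are generated in weight one'' is false: neither side is generated in weight one. The degree-$n$ component of $\Lambda_{t^k,t}^\Z$ (and of $\bigoplus_n K(A_k^{[n]})$) has rank $p(n)$ over $\Z[t,t^{-1}]$, while the subalgebra generated by the weight-one piece has rank-one graded components. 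More to the point, a Hopf pairing on $\Lambda$ that is diagonal in the power sums is determined by the independent parameters $\langle p_n,p_n\rangle$ for \emph{every} $n$, so fixing the weight-one value leaves all higher ones unconstrained; a weight-one check cannot distinguish the Hall pairing from the Macdonald pairing. What is actually needed is the pairing on a full generating set, e.g.\ $\langle h_m,h_n\rangle$ for all $m,n$---equivalently, the commutation relation between $[Q^{(n)}]$ and $[P^{(m)}]$. This is precisely what the paper supplies: Proposition \ref{prop:Grothendieck}(2) (imported from \cite{CL1}) gives $[Q^{(n)}]\circ[P^{(m)}]=\sum_{l}\binom{k+l-1}{l}_t [P^{(m-l)}]\circ[Q^{(n-l)}]$ on the Grothendieck group, Example \ref{strip} gives the identical relation for $q^{(n)}\circ p^{(m)}$ on symmetric functions, and the theorem follows by inductively evaluating $\langle [H^\lambda],[H^\mu]\rangle$ on both sides. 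Your proposal contains no substitute for this computation.

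Secondly, the step you flag as ``the main obstacle''---that induction categorifies multiplication by Schur functions---is left as an assertion, and you mislocate the deformation there. Induction is the easy, undeformed part: $A_k^{[m+n]}\otimes_{A_k^{[m]}\otimes A_k^{[n]}}\bigl(A_k^{[m]}e_\mu\otimes A_k^{[n]}e_\nu\bigr)\cong A_k^{[m+n]}(e_\mu\otimes e_\nu)$ decomposes with the ordinary, $t$-free Littlewood--Richardson multiplicities, which is Proposition \ref{prop:Grothendieck}(1). All of the dependence on $k$ and $t$ enters through the graded $\Hom$ form, i.e.\ through the interaction of the $P$'s with the $Q$'s, not through the product. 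Note also that the paper proves Theorem B independently of Theorem A (its inputs are the cited proposition of \cite{CL1} and the elementary horizontal-strip computation of Example \ref{strip}); invoking Theorem A as the engine of the proof without specifying how it is to be used leaves the argument incomplete.
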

\noindent 
As a result, we obtain an interpretation of the polynomials $c^{\ka\la}_{\mu\nu}(t)$ as decomposition
numbers in the convolution product of explicit bimodules over the rings $A^{[n]}_k$. 
When $k=1$ there is no interesting grading on the module category, and the identification above reduces to the well-known isomorphism
between $\Lambda$ and the Grothendieck group of representations of all symmetric groups \cite{G}.  
%
%

The graded rings $A_k^{[n]}$ appear in several other representation theoretic contexts.  For example, these algebras for $k=2$ play a central role in the categorification  \cite{CL1}  of the Heisenberg double of $\Lambda_{t^2,t}$ and in the level one quantum affine categorifications of \cite{CL2}; the algebras $A_k^{[n]}$ for $k > 2$ should appear in the higher level analogs of those constructions.  In fact, part of our original motivation for considering the polynomials $c^{\ka\la}_{\mu\nu}(t)$ was to combinatorially compute the structure constants for multiplication in the canonical basis of the quantum Heisenberg algebra considered in \cite{CL1}; these structure constants are given by the $k=2$ case of Theorem A.  Similarly, the generalized Littlewood-Richardson coefficients $c^{\ka\la}_{\mu\nu}(t) $ should all appear as structure constants for multiplication in the canonical basis of certain infinite dimensional Hopf algebras.  Another closely related appearance of $A_k^{[n]}$ involves category $\mathcal{O}$ for the rational Cherednik algebra of the complex reflection group $(\Z/k\Z) \wr S_n$ at integral parameters.  The algebra $A_k^{[n]}$ is an example of an Ariki-Koike algebra, and the study of $\mathcal{O}$ via the KZ functor involves mapping $\mathcal{O}$ to a category of $A_k^{[n]}$ modules.  From this point of view, Theorems A and B together have applications to combinatorial description of hom spaces between projective modules in $\mathcal{O}$, though we have not fully explored this here.  Relationships between higher-level Heisenberg categorification and category $\mathcal{O}$ for rational Cherednik algebras appear in \cite{SV}.

The bilinear form on $\Lambda_{t^k,t}$, the ring $A_k^{[n]}$, and the generalized Littlewood-Richardson coefficients $c^{\ka\la}_{\mu\nu}(t) $ all depend on the choice of positive integer $k$.  It might be interesting to formulate versions of Theorems A and B in a way that does not require $k$ to be a positive integer and thus recover the variable $q$ in the Macdonald theory.   Notice that a 
rational function in $\C(q,t)$ is determined by its values at $q=t^k$ at all positive integers $k$, so in a sense we lose no information
by specializing.

\subsection{Acknowledgements}
The authors would like to thank Sabin Cautis, Josh Sussan, Ben Webster, Stephen Morgan, and Alistair Savage for a number of helpful conversations.

\section{The Littlewood-Richardson rule}
\label{combsection}
\subsection{Notations}
For all notations in this section, we have followed
MacDonald's book \cite{Mac}. 

Given partitions $\ka,\la,\mu,\nu$, let $\Tab(\la-\mu,\nu)$ denote
the set of semi-standard Young tableaux of shape $\la-\mu$, and 
content $\nu$. The \emph{word} $w(T)$ of a tableau $T$ is the set of
numbers in the diagram read from right to left, top to bottom.
For instance, the word of the tableau

\[\kyb
&&1&2&4&4&4 \cr
&&3&4 \cr
1&2&2 \cr
\kye\]
%

\noindent
is $4442143221$. The word of a partition $w(\la)$
is defined as the word of the tableau of shape $\la$ 
in which row $i$ is filled with the number $i$. 
Let $\Tab^0(\la-\mu,\nu)$ denote the subset of
tableau whose word $a_1\cdots a_n$ 
is a \emph{lattice permutation}, meaning 
that for each $i,k$, the number of occurrences of $i$ in 
$a_1\cdots a_k$ is greater than or equal to the number of occurrences
of $i+1$. More generally, we define $\Tab^0(\la-\mu,\nu-\ka)$ to
be the set of tableaux $T$ with content $\nu-\ka$
such that the concatenated word
$w(\ka)w(T)$ is a lattice
permutation.

Fix an integer $k\geq 1$, and define a $k$-tableau of shape $\la-\mu$ 
to be a labeling of the boxes of $\la-\mu$ with monomials of
of the form $a t^b$ for $a\geq 1$, $0 \leq b\leq k-1$. 
We define a total order on these monomials by
\begin{equation}
\label{leqt}
a t^b \leq c t^d \Longleftrightarrow b<d \mbox{ or } b=d \mbox{ and } a \leq c,
\end{equation}
which is the same as the ordering obtained by replacing $t$ by
a large positive number. Call $T'$ the tableau obtained from $T$
by setting $t=1$,
and let $\Tab_k(\la-\mu,\nu-\ka)$ be the set of $k$-tableaux
which are semistandard with respect to \eqref{leqt}, such that
content of $T'$ is $\nu-\ka$. 
We also define a statistic on $k$-Tableau by
\begin{equation}
c(T) = \prod_{a t^b \in T} t^b.
\end{equation}

Any $k$-tableau $T$ corresponds to 
a sequence of regular tableaux $T^{i}$ for $i\geq 0$, defined as 
the subdiagram of coefficients in 
all boxes containing $at^i$ for some $a$. For instance, if
\[T=\kyb
&&&$3$&$2t$&$2t$ \cr
&&$1$& $t$ & $t^2$ \cr
$t^2$& $t^2$\cr
\kye,\]
then
\[T^0=\kyb
&&&$3$&& \cr
&&$1$ &&\cr
& \cr
\kye,\quad 
T^1=
\kyb
&&&&$2$&$2$ \cr
&&& $1$ &  \cr
&\cr
\kye,\quad
T^2=\kyb
&&&&& \cr
&&&& $1$ \cr
$1$& $1$\cr
\kye.\]
%
%
%

Let $\Tab_k^0(\la-\mu,\nu-\ka)$ denote the subset of $k$-tableaux such that 
\[w(\ka)w(T^0)w(T^1)\cdots\] 
is a lattice permutation. For instance,
\[\kyb
&&$2$&$3$&$2t^2$\cr
&$t$&$t$\cr
$1$ & $2t^2$\cr
\kye \in \Tab^0_3([532]-[21],[442]-[21])\]
because $1123211122$ is a lattice permutation,
and the semistandardness condition is satisfied. 
The statistic is $c(T)=t^6$.

\subsection{The main theorem}

Consider the space of symmetric polynomials $\Lambda$ in
infinitely many variables, and let $p_\mu$, $e_\mu$, $h_\mu$, and $s_\mu$
denote the power sum, elementary, complete, and Schur 
bases respectively.  We denote by $\Lambda^\Z = \mbox{span}_{\Z[t,t^{-1},q,q^{-1}]}\{s_\mu\}$ the integral form of $\Lambda$ spanned by the Schur functions.
The MacDonald inner product on $\Lambda$ is defined in the 
power sum basis by
\[(p_\mu,p_\nu)_{q,t} = \delta_{\mu\nu} \zee(\mu) 
\prod_{j} \frac{1-q^{\mu_j}}{1-t^{\mu_j}},\]
where
\[p_\mu = \prod_j p_j,\quad p_j = \sum_i x_i^j,\quad
\zee(\mu) = \aut(\mu)\prod_j \mu_j.\]
For a fixed integer $k$,
and any symmetric polynomial $f \in \Lambda$, define
a dual multiplication operator by
\[(f^* g,h)_{t^k,t} = (g,fh)_{t^k,t}.\]

We may now state the main theorem:
\begin{theorem}
\label{tabthm}
Fix a positive integer $k$. There exist unique coefficients
$c^{\ka\la}_{\mu\nu}(t)$ satisfying
\[s^*_\nu s_\la = \sum_{\mu,\ka} c^{\ka\la}_{\mu\nu}(t) s_\mu s_\ka^*.\]
Furthermore, 
\begin{equation}
\label{CR}
c^{\ka\la}_{\mu\nu}(t) = \sum_{T \in \Tab^0_k(\la-\mu,\nu-\ka)} c(T),
\end{equation}
if $\mu \subset \la$ and $\ka \subset \nu$, or zero otherwise.
\end{theorem}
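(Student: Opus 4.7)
The plan is to prove Theorem~\ref{tabthm} in three stages: an algebraic reduction identifying $c^{\ka\la}_{\mu\nu}(t)$ with a MacDonald inner product of skew Schurs, a plethystic expansion of that inner product, and a combinatorial matching with the $k$-tableau count.

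\emph{Algebraic reduction.} Existence and uniqueness of the $c^{\ka\la}_{\mu\nu}(t)$ follow from the linear independence of the operators $\{s_\mu s_\ka^*\}$ on $\Lambda$, which can be established by induction on $|\ka|$: evaluating both sides on the constant function $1$ isolates the $\ka=\emptyset$ terms, and the higher-$\ka$ terms are extracted by pairing with further test inputs. To compute the coefficients, I would first verify directly on the power sum basis that the MacDonald inner product is compatible with the standard coproduct $\Delta$ on $\Lambda$ in the Hopf sense: $(fg,h)_{t^k,t} = (f\otimes g,\, \Delta h)_{t^k,t,\otimes}$. A formal consequence is the Leibniz-type rule $f^*(gh) = \sum_{(f)} (f_{(1)}^* g)(f_{(2)}^* h)$. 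Applied to $\Delta s_\nu = \sum_\mu s_\mu \otimes s_{\nu/\mu}$, this gives the operator identity $s_\nu^* s_\la = \sum_\mu (s_\mu^* s_\la)\cdot s_{\nu/\mu}^*$. A second application of the same duality yields $s_\mu^* s_\la = \sum_\beta (s_\beta, s_\mu)_{t^k,t}\, s_{\la/\beta}$ as an element of $\Lambda$. Expanding $s_{\la/\beta}$ and $s_{\nu/\mu}$ in the Schur basis and collecting terms in $\{s_\sigma s_\ka^*\}$ yields the key identity $c^{\ka\la}_{\sigma\nu}(t) = (s_{\la/\sigma}, s_{\nu/\ka})_{t^k,t}$.

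\emph{Plethystic expansion.} Set $U = 1 + t + \cdots + t^{k-1}$ and let $T_U$ denote the plethystic substitution $X \mapsto UX$. The power sum formula for $(\,,\,)_{t^k,t}$ shows immediately that $(f,g)_{t^k,t} = \langle f,\, T_U g\rangle$, where $\langle\,,\,\rangle$ is the standard inner product. Iterating the addition formula $s_{\nu/\ka}[A+B] = \sum_\alpha s_{\alpha/\ka}[A]\, s_{\nu/\alpha}[B]$ along the decomposition $UX = X + tX + \cdots + t^{k-1}X$, together with the homogeneity $s_{\sigma/\tau}[t^iX] = t^{i|\sigma/\tau|}s_{\sigma/\tau}[X]$, produces the expansion
\[ s_{\nu/\ka}[UX] = \sum_{\ka=\alpha^{(0)}\subset\cdots\subset\alpha^{(k)}=\nu} t^{\sum_{i=0}^{k-1} i\,|\alpha^{(i+1)}/\alpha^{(i)}|} \prod_{i=0}^{k-1} s_{\alpha^{(i+1)}/\alpha^{(i)}} . \]
Substituting into $(s_{\la/\mu}, s_{\nu/\ka})_{t^k,t} = \langle s_{\la/\mu},\, s_{\nu/\ka}[UX]\rangle$ reduces the main coefficient to a $t$-weighted sum, indexed by chains $\alpha$ from $\ka$ to $\nu$, of the classical pairings $\langle s_{\la/\mu},\, \prod_i s_{\alpha^{(i+1)}/\alpha^{(i)}}\rangle$.

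\emph{Combinatorial matching.} Iterating the classical Littlewood--Richardson rule, the pairing $\langle s_{\la/\mu},\, \prod_i s_{\alpha^{(i+1)}/\alpha^{(i)}}\rangle$ counts tuples $(T^{(0)},\ldots,T^{(k-1)})$ of semistandard tableaux, with $T^{(i)}$ of shape $\gamma^{(i+1)}/\gamma^{(i)}$ for some chain $\mu=\gamma^{(0)}\subset\cdots\subset\gamma^{(k)}=\la$, whose contents accumulate along the chain $\alpha$ and for which the concatenated word $w(\ka)\,w(T^{(0)})\cdots w(T^{(k-1)})$ is a lattice permutation. Recording each entry of $T^{(i)}$ as the monomial $a t^i$ assembles the $T^{(i)}$ into a single filling $T$ of shape $\la-\mu$; the total order on the monomials $a t^b$ makes $T$ semistandard precisely when each $T^{(i)}$ is. Thus $T$ is a $k$-tableau in $\Tab^0_k(\la-\mu,\nu-\ka)$ with weight $c(T) = t^{\sum_i i|T^{(i)}|} = t^{\sum_i i|\alpha^{(i+1)}/\alpha^{(i)}|}$, and summing over chains and tuples recovers $\sum_{T} c(T)$.

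The main obstacle is the last step: one must verify that the single concatenated-lattice condition defining $\Tab^0_k$ really does repackage the collection of individual lattice conditions together with the partition property of the intermediate accumulated contents $\alpha^{(i)}$, and that the shape-decomposition and $t$-labeling bijection respects all conditions on both sides. This is a standard but careful iteration of the LR rule.
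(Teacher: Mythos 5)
Your proposal is correct and arrives at the same formula, but its middle is organized quite differently from the paper's. The first step coincides: the paper's Lemma \ref{skewlemma} identifies $c^{\ka\la}_{\mu\nu}(t)$ with $(s_{\la/\mu},s_{\nu/\ka})_{t^k,t}$, verified there on power sums and transported by change of basis, whereas you derive it from the Hopf self-duality of the Macdonald pairing; both work, and your Leibniz-rule computation has the added virtue of actually establishing the existence claim, which linear independence of $\{s_\mu s_\ka^*\}$ alone would not. From there the paper detours through the complete homogeneous functions: Lemma \ref{nl} computes $(s_{\la/\mu},h_\nu)_{t^k,t}$ as a weighted count over $\Tab_k(\la-\mu,\nu)$ by induction on $\ell(\nu)$, using the $h$-expansion of $g_m=\rho_{t^k,t}(h_m)$ and the Pieri rule; the $\ka=\emptyset$ case of the theorem is then extracted by inverting the Kostka matrix, and the general $\ka$ is reduced to that case by the classical expansion of $s_{\nu/\ka}$. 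You instead promote the plethystic identity $(f,g)_{t^k,t}=\langle f,\,g[(1+t+\cdots+t^{k-1})X]\rangle$ --- present in the paper only implicitly as $g_\mu(x)=h_\mu(x;tx;t^2x;\dots)$ --- to the centerpiece, apply it directly to $s_{\nu/\ka}$ via the addition formula to obtain a sum over chains $\ka=\alpha^{(0)}\subset\cdots\subset\alpha^{(k)}=\nu$ of products of skew Schur functions, and finish with an iterated Zelevinsky rule. This buys a single uniform argument for all $\ka$, with no Kostka inversion and no induction on $\ell(\nu)$; the price is that the final combinatorial matching carries more weight, and the point you flag is exactly the right one: one must check that the single lattice condition on $w(\ka)w(T^0)w(T^1)\cdots$ is equivalent to the conjunction of the relative conditions on each $w(\alpha^{(i)})w(T^{(i)})$ together with the intermediate contents $\alpha^{(i)}$ being partitions (the global condition forces this automatically), and that semistandardness under the order \eqref{leqt} is equivalent to the exponent level sets forming a nested chain of skew shapes, each filled semistandardly. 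Both verifications are routine, and the paper's own reduction via the filtration $F$ and the disconnected diagram $\la_F$ leaves details of precisely the same nature to the reader.
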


\begin{ex} Take $k=2,\ka=[1],\la=[32],\mu=[1],\nu=[32]$.
By lemma \ref{skewlemma} below we have
\[c^{\ka \la}_{\mu\nu}(t) = (s_{\la/\mu},s_{\nu/\ka})_{t^2,t}=2+5t+7t^2+5t^3+2t^4.\]
Which equals 21 at $t=1$.
On the other hand, there are 25 elements of $\Tab_2([32]-[1],[32]-[1])$.
The remaining four that do not satisfy the lattice word condition are
\[
\kyb
&$2$&$2$ \cr
$1$ & $t$ \cr
\kye,\quad
\kyb
&$2$&$2$\cr
$t$ & $t$ \cr
\kye,\quad
\kyb
&$2$&$t$ \cr
$2$&$t$ \cr
\kye, \quad
\kyb
&$2$&$2t$ \cr
$t$ & $t$ \cr
\kye.
\]
\end{ex}
\begin{ex} \label{strip}
If $\nu=(n),\la=(m)$ consist of a horizontal
strip, then $c^{\ka(m)}_{\mu(n)}(t)$ is zero unless 
$\ka=(n-l),\mu=(m-l)$ for some $l\geq 0$. In this case
\[\Tab^0_k(\la-\mu,\nu-\ka)=\Tab_k(\la-\mu,\nu-\ka)\]
because the coefficients of every boxare one, so that the lattice word
condition is always satisfied. By stars and bars, we have
\[c^{(n-l)(m)}_{(m-l),(n)}(t) = 
\frac{(1-t^{k+l-1})\cdots(1-t^k)}{(1-t)\cdots(1-t^l)}.\]
\end{ex}

Before proving the theorem, we need a couple of lemmas.
Let 
$s_{\la/\mu}$
denote the skew Schur polynomial. More generally, if $u_\mu$ is
any basis of $\Lambda$, we define
\[u_{\la/\mu} = u^*_\mu u_\la\]
where the dual is taken with respect to the \emph{standard} inner product,
$k=1$.

\begin{lemma}
\label{skewlemma}
We have
\[c^{\ka\la}_{\mu\nu}(t) = (s_{\la/\mu},s_{\nu/\ka})_{t^k,t}.\]
\end{lemma}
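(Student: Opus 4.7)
The plan is to derive the operator identity
\[s_\nu^* s_\la \;=\; \sum_{\mu,\ka} (s_{\la/\mu}, s_{\nu/\ka})_{t^k,t}\, s_\mu s_\ka^*,\]
from which the lemma follows by the uniqueness of coefficients in the basis $\{s_\mu s_\ka^*\}$ asserted in Theorem \ref{tabthm}. The essential Hopf-algebraic input is that the standard coproduct $\Delta$ on $\Lambda$ (determined by primitivity of the $p_n$) remains adjoint to multiplication with respect to $(\,,)_{t^k,t}$: this is a direct orthogonality check on power sums, using that $p_\mu$ remains an orthogonal basis for the Macdonald pairing. Consequently, for any $h\in\Lambda$ the adjoint $h^*$ obeys the Leibniz rule
\[h^*(fg) \;=\; \sum h_{(1)}^*(f)\, h_{(2)}^*(g), \qquad \Delta h = \sum h_{(1)} \otimes h_{(2)}.\]

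Applied to $h = s_\nu$ with $\Delta s_\nu = \sum_\alpha s_\alpha \otimes s_{\nu/\alpha}$, this yields the operator identity $s_\nu^* s_\la = \sum_\alpha s_\alpha^*(s_\la)\cdot s_{\nu/\alpha}^*$. I would then compute $s_\alpha^*(s_\la)$ in the Schur basis by pairing against an arbitrary $g$ and using the coproduct of $s_\la$:
\[(s_\alpha^*(s_\la), g)_{t^k,t} \;=\; (s_\la, s_\alpha g)_{t^k,t} \;=\; \sum_\mu (s_\mu, s_\alpha)_{t^k,t}\,(s_{\la/\mu}, g)_{t^k,t},\]
which by non-degeneracy gives $s_\alpha^*(s_\la) = \sum_\mu (s_\mu, s_\alpha)_{t^k,t}\, s_{\la/\mu}$. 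Substituting back, expanding $s_{\la/\mu} = \sum_{\mu'} c^\la_{\mu\mu'} s_{\mu'}$ and $s_{\nu/\alpha}^* = \sum_{\ka'} c^\nu_{\alpha\ka'} s_{\ka'}^*$, and collecting the coefficient of $s_{\mu'} s_{\ka'}^*$ produces
\[\sum_{\mu,\alpha} c^\la_{\mu\mu'}\, c^\nu_{\alpha\ka'}\,(s_\mu, s_\alpha)_{t^k,t} \;=\; (s_{\la/\mu'}, s_{\nu/\ka'})_{t^k,t},\]
where I use symmetry $c^\la_{\mu\mu'} = c^\la_{\mu'\mu}$ to recognize $\sum_\mu c^\la_{\mu\mu'} s_\mu = s_{\la/\mu'}$, and likewise on the $\nu$ side.

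The only non-routine step is verifying that $\Delta$ is adjoint to multiplication for $(\,,)_{t^k,t}$; once this is granted, every subsequent manipulation is bookkeeping with the coproduct and non-degeneracy of the inner product. Linear independence of $\{s_\mu s_\ka^*\}$ as operators on $\Lambda$, needed to conclude $c^{\ka\la}_{\mu\nu}(t) = (s_{\la/\mu}, s_{\nu/\ka})_{t^k,t}$ from the operator identity, is the uniqueness clause of Theorem \ref{tabthm} and reflects the faithful Heisenberg-double action on $\Lambda$.
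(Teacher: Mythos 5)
Your proof is correct, but it takes a more structural route than the paper's. The paper verifies the identity $c^{\ka\la}_{\mu\nu}(t)=(s_{\la/\mu},s_{\nu/\ka})_{t^k,t}$ by first checking the analogous statement with both occurrences of the Schur basis replaced by power sums --- where $p_\nu^* p_\la$ can be normal-ordered directly using $p_j^* = j\,\frac{1-t^{kj}}{1-t^j}\,\partial/\partial p_j$ and everything is diagonal --- and then transporting the result through the $p\to s$ change-of-basis matrix applied to each index of the tensor $c^{\ka\la}_{\mu\nu}(t)$. You instead observe that $(\,,)_{t^k,t}$ is still a Hopf pairing (note that the precise input here is not just orthogonality of the $p_\mu$ but the fact that the deformation factor $\prod_j\frac{1-t^{k\mu_j}}{1-t^{\mu_j}}$ is multiplicative over the parts of $\mu$), deduce the Leibniz rule for adjoints, and run the computation in the Schur basis using $\Delta s_\nu=\sum_\alpha s_\alpha\otimes s_{\nu/\alpha}$ together with the symmetry $c^\la_{\mu\mu'}=c^\la_{\mu'\mu}$. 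The two arguments rest on the same underlying fact, but yours packages the change of basis into the coproduct formulas and makes the mechanism explicit, at the cost of somewhat more bookkeeping. One small caveat: you should not cite the uniqueness clause of Theorem \ref{tabthm} for the linear independence of the operators $\{s_\mu s_\ka^*\}$, since in the paper that clause is itself deduced from Lemma \ref{skewlemma}; the linear independence is a standard, independently provable fact about the Heisenberg double (and it persists for the deformed adjoints because the $s_\ka^*$ taken with respect to $(\,,)_{t^k,t}$ are related to the standard ones by an invertible change of basis), so it should be stated as such to avoid circularity.
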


\begin{proof}
It is straightforward to check this relation when the Schur basis
is replaced by the power sums on the right hand side, and in the
definition of $c_{\mu\nu}^{\ka\la}(t)$. 
Then simply apply the change of basis
matrix from $p$ to $s$ to each of the coordinates in the tensor
$c^{\ka\la}_{\mu\nu}(t)$.

\end{proof}

\begin{lemma}
\label{nl}
We have
\[(s_{\la/\mu},h_{\nu})_{t^k,t} = \sum_{T \in \Tab_k(\la-\mu,\nu)} c(T).\]
\end{lemma}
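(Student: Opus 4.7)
The plan is to reduce the lemma to the classical fact that the number of semistandard tableaux of a given shape and content is the coefficient of the corresponding monomial in a skew Schur polynomial, via the plethystic reformulation of the Macdonald inner product at $q = t^k$.

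A direct computation on the power sum basis, using $p_n[\frac{1-q}{1-t}X] = \frac{1-q^n}{1-t^n}\,p_n(X)$, shows that for any symmetric functions $f,g$,
\[(f,g)_{t^k,t} \;=\; \langle f,\; g[[k]_t X]\rangle,\]
where $\langle\,,\rangle$ is the standard Hall inner product and $[k]_t = 1+t+\cdots+t^{k-1}$. Moreover, plethystic multiplication by $[k]_t$ is self-adjoint with respect to $\langle\,,\rangle$, as can be verified on power sums using the identity $p_\mu[YX] = p_\mu(Y)\,p_\mu(X)$. Combining these two observations yields
\[(s_{\la/\mu}, h_\nu)_{t^k,t} \;=\; \langle s_{\la/\mu}[[k]_t X],\, h_\nu\rangle_X.\]

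I would then interpret the left-hand argument combinatorially. The formal alphabet $[k]_t X$ is $\{x_i t^b : i \geq 1,\; 0 \leq b \leq k-1\}$, and the natural ordering $x_i t^b < x_j t^d$ iff $b<d$ or ($b=d$ and $i<j$) matches exactly the order \eqref{leqt} on monomials $at^b$. The classical tableau formula for a skew Schur polynomial in an ordered alphabet then gives
\[s_{\la/\mu}[[k]_t X] \;=\; \sum_{T} c(T)\, x^{T'},\]
where $T$ ranges over $k$-tableaux of shape $\la-\mu$, $T'$ is the filling obtained by setting $t = 1$, and $x^{T'}$ is the corresponding $x$-monomial. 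Pairing both sides with $h_\nu$ under $\langle\,,\rangle$ extracts the coefficient of $x^\nu$, since $\{h_\nu\}$ and $\{m_\nu\}$ are dual bases, and this coefficient is exactly $\sum_{T \in \Tab_k(\la-\mu,\nu)} c(T)$.

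The key input is the plethystic identity $(f,g)_{t^k,t} = \langle f, g[[k]_t X]\rangle$; once this is in hand, the remainder of the argument is the routine verification that the $k$-tableaux of Section \ref{combsection} are exactly the semistandard tableaux valued in the ordered alphabet $\{x_i t^b\}$, with the content of $T'$ encoded by the $x$-exponent vector. I expect no serious obstacle beyond careful bookkeeping of the weights.
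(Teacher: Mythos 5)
Your proof is correct, but it takes a genuinely different route from the paper's. Both arguments ultimately rest on the same key fact: the $q=t^k$ inner product is implemented by plethysm with $[k]_t = 1+t+\cdots+t^{k-1}$ (in the paper this appears as the homomorphism $\rho_{t^k,t}\colon p_j \mapsto \frac{1-t^{kj}}{1-t^j}p_j$, together with the observation that $g_\mu(x_1,x_2,\ldots) = h_\mu(x_1,\ldots;tx_1,\ldots;\ldots)$, which is exactly $h_\mu[[k]_tX]$). The difference is where the plethysm is applied. The paper keeps it on the $h_\nu$ side, expands $g_m$ explicitly in the $h$-basis via the coefficients $a_\pi$ of \eqref{amu}, invokes the Pieri rule, and then runs an induction on $\ell(\nu)$ supported by the explicit bijections \eqref{indbij}; this produces the filtration-by-$t$-degree picture ($T\mapsto (T^0,T^1,\ldots)$) that is reused in spirit in the proof of Theorem \ref{tabthm}. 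You instead transfer the plethysm onto $s_{\la/\mu}$ using self-adjointness of $f\mapsto f[[k]_tX]$ (correctly verified on power sums), and then read the answer off the standard tableau expansion of a skew Schur function in the totally ordered alphabet $\{x_it^b\}$, whose order matches \eqref{leqt} and whose semistandard fillings are precisely the $k$-tableaux, with $\lang \cdot, h_\nu\rang$ extracting the coefficient of $x^\nu$ by $\lang m_\rho,h_\nu\rang = \delta_{\rho\nu}$. Your version is shorter and avoids both the induction and the explicit $h$-expansion of $g_m$; the paper's version is more bijective and hands-on, which pays off later when similar decompositions are needed for the main theorem. No gaps.
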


\begin{proof}
We begin by rewriting
\begin{equation}
\label{h2g}
(s_{\la/\mu},h_\nu)_{t^k,t} = (h_\nu^*s_{\la/\mu},1)=
(s_{\la},g_{\nu} s_{\mu}),
\end{equation}
where $g_\mu$ is 
the image of $h_\mu$ under the homomorphism defined
on generators by
\[\rho_{t^k,t} : p_j \mapsto \frac{1-t^{kj}}{1-t^j} p_j.\]

Now let us proceed by induction on the length of $\nu$.
We start with the base case in which $\nu$ only has one component,
say $\nu_1=m$. We first find the coefficients in the expansion
\begin{equation}
\label{gm}
g_m = \sum_{\pi} a_\pi(t) h_\pi.
\end{equation}
We claim that
%
%
\begin{equation}
\label{amu}
a_\pi = \sum_{\beta \rightarrow \pi}\ 
\sum_{0 \leq a_1<\cdots<a_\ell\leq k-1} 
t^{a_1 \beta_1+\cdots+a_{\ell} \beta_\ell},
\end{equation}
%
where $\ell=\ell(\pi)$ is the length of $\pi$, and
$\beta$ ranges over all rearrangements of $\pi$, i.e. over the
orbit of $\pi$ under $S_\ell$.
The easiest way to see this
is to notice that
\[g_\mu(x_1,x_2,...) = h_\mu(x_1,x_2,...;tx_1,tx_2,...;t^2x_1,t^2x_2,...),\]
and compare the coefficient of $x^\nu$ with the
right hand side of \eqref{gm}.

Inserting \eqref{gm} and \eqref{amu} into \eqref{h2g}, and using the Pieri rule,
we have
\begin{equation}
\label{basecase}
(s_{\la/\mu},h_m)_{t^k,t} =
\sum_{\beta} \sum_{a_1<\cdots<a_{\ell}} 
t^{a_1\beta_1+\cdots+a_\ell \beta_\ell}|\Tab(\la-\mu,\beta)|.
\end{equation}
In particular, the tableau count is independent of the
ordering of $\beta$.
Finally, there is a bijection
\[\bigcup_{\beta} \left\{a_1<\cdots <a_{\ell}\right\} \times
\Tab(\la-\mu,\beta) \longleftrightarrow \Tab_k(\la-\mu,m),\]
in which a box label $j$ maps to $a_j$.
The statistic $c(T)$ corresponds
to the power of $t$ in \eqref{basecase}, proving the base case.
%
%

Now for the induction step, let $\nu=\nu' \cup \nu''$ be any decomposition
of $\nu$, so that $g_\nu=g_{\nu'}g_{\nu''}$. Let us also fix an identification
of the rows of $\nu',\nu''$ with the corresponding row of $\nu$. Since
\[(s_\la,g_\nu s_\mu) = 
\sum_{\pi} (s_\la,g_{\nu'}s_\pi)(s_\pi,g_{\nu''} s_\mu),\]
it suffices to prove that there is a bijection
\begin{equation}
\label{indbij}
\Tab_k(\la-\mu,\nu) \longleftrightarrow
\bigcup_{\pi} \Tab_k(\la-\pi,\nu') \times
\Tab_k(\pi-\mu,\nu''),
\end{equation}
such that $c(T)=c(U)c(V)$ whenever $T$ maps to $(U,V)$.
To do this, partition the left side into groups 
$\Tab_k(\la-\mu,\nu)_\gamma$ indexed by the
multiset $\gamma$ of all monomials $at^b$ in $T$.
Since \eqref{leqt} is a total ordering, any choice of content $\gamma$
induces a multiset $c \subset \Z_{\geq 0}$ such that
\[\big|\Tab_k(\la-\mu,\nu)_\gamma\big| = \big|\Tab(\la-\mu,c)\big|.\]

The choice of $\gamma$ induces
a corresponding decomposition $(\gamma',\gamma'')$
on the right side of \eqref{indbij}, by taking the elements of 
$\gamma',\gamma''$ to be the monomials $at^b \in \gamma$ 
such that the row $\nu_a$ is in $\nu',\nu''$ respectively.
The argument of the preceding paragraph reduces 
the statement to the case $k=1$, which is true by the
usual Littlewood-Richardson rule.
\end{proof}

We may now prove the theorem.

\begin{proof}

The existence and uniqueness statement follow from lemma \ref{skewlemma}.

We first prove the case $\ka=\emptyset$, by extending the
proof of the usual Littlewood-Richardson rule, as it is
explained in \cite{Mac}. Using the expansion of
$s_\nu$ in the monomial basis, which is well known to be the dual basis
to $h_\nu$ under the standard inner product, we have
\[(s_{\la/\mu},h_\nu)_{t^k,t} = 
\sum_{\pi} (s_{\la/\mu},s_\pi)_{t^k,t} \big|\Tab(\pi,\nu)\big|.\]
%
%
%
Then by lemma \ref{nl}, and the invertibility of the triangular matrix
$|\Tab(\pi,\nu)|$, it suffices to prove that there is a bijection
\begin{equation}
\label{lrbij}
\Tab_k(\la-\mu,\nu) \longleftrightarrow
\bigcup_{\pi} \Tab^0_k(\la-\mu,\pi) \times \Tab(\pi,\nu),
\end{equation}
producing the correct statistic $c(T)$. 

Each tableau $T \in \Tab_k(\la-\mu,\nu)$ has an associated filtration
\[F : \mu=\mu^1 \subset \cdots \subset \mu^{k+1}=\la,\]
so that $\mu^{i+1}-\mu^{i}$ is the shape of $T^i$.
We obtain a decomposition
\begin{equation}
\label{Fbij}
\Tab_k(\la-\mu,\nu) \longleftrightarrow
\bigcup_{F} \Tab_k(\la-\mu,\nu)_F,
\end{equation}
and similarly for $\Tab^0(\la-\mu,\nu)$, by restriction.
Given such a filtration $F$, let $\la_F$ denote any skew diagram
which is a disconnected union of the shapes $\mu_{i+1}-\mu_i$, positioned in the
plane in some way in order from upper right to lower left. 
For instance, one choice of $\la_F$ might be
\[F=[1]\subset [3,2] \subset [5,2] \subset [5,3,1],\quad
\la_F = \mbox{\young(:::::::\ \ ,::::::\ \ ,::::\ \ ,::\ ,\ )}.\]
For any choice of $\la_F$, we obtain a bijection
\[\Tab_k(\la-\mu,\nu)_F \longleftrightarrow \Tab(\la_F,\nu),\]
that respects the corresponding lattice word conditions.
This reduces \eqref{lrbij} to the case $k=1$, which follows from the usual
algorithm of Littlewood-Robinson-Schensted. 

We now prove the general case. 
Using the usual Littlewood-Richardson rule to expand $s_{\nu/\ka}$, we have
\[(s_{\la/\mu},s_{\nu/\ka})_{t^k,t} = \sum_{\pi}
(s_{\la/\mu},s_{\pi})_{t^k,t} \big|\Tab^0(\nu-\ka,\pi)\big|.\]
Then applying the special case we just proved, it suffices 
to find a suitable bijection
\begin{equation}
\label{crbij}
\Tab^0_k(\la-\mu,\nu-\ka) \longleftrightarrow
\bigcup_{\pi} \Tab^0_k(\la-\mu,\pi) \times 
\Tab^0(\nu-\ka,\pi).
\end{equation}
Once again, the decomposition \eqref{Fbij} reduces this statement 
to the case $k=1$, which was proved by Zelevinsky \cite{Z}.
\end{proof}



Now consider the semi-linear form on $\Lambda_{t^k,t}^\Z$ defined by
\begin{equation}
\label{hermsp}
\lang f, g \rang_k = (f,t^{(1-k)d} \overline{g})_{t^{2k},t^2},
\end{equation}
Where the conjugation takes $t\mapsto t^{-1}$.
Its extension to the rationals takes the form
\begin{equation}
\label{hermsp0}
\lang p_\mu,p_\nu\rang_k = \delta_{\mu,\nu}
\zee(\mu)\prod_i \left(t^{(1-k)\mu_i}+t^{(3-k)\mu_i}+\cdots+t^{(k-1)\mu_i}\right),
\end{equation}
establishing that it is Hermitian.
With respect to this inner product,
we then have
\[s_\nu^*s_\lambda = \sum_{\mu,\kappa} C^{\kappa\lambda}_{\mu\nu}(t)
s_\mu s_{\kappa}^*,\]
where
\[C^{\ka\la}_{\mu\nu}(t)=t^{(1-k)(|\la|-|\mu|)}c^{\kappa\lambda}_{\mu\nu}(t^2).\]

Call a Laurent polynomial 
\[f(t) = a_{-l}t^{-l} + a_{2-l} t^{2-l} + \dots + a_{l-2} t^{l-2} + a_l t^l\]
symmetric unimodal if for all $f(t)=f(t^{-1})$, and
$a_i \leq  a_{i+2}$ for $i<0$.

\begin{proposition} \label{prop:unimod}
The polynomials $C^{\ka\la}_{\mu\nu}(t)$ are symmetric unimodal.
\end{proposition}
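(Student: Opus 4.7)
The plan is to recognize $C^{\ka\la}_{\mu\nu}(t)$ as the graded character of a genuine finite-dimensional $\sltwo$-module, because symmetric unimodality then follows from elementary $\sltwo$-representation theory: every finite-dimensional character decomposes as a nonnegative integer combination of the irreducible characters $t^{-2\ell}+t^{-2\ell+2}+\cdots+t^{2\ell}$, each of which is manifestly symmetric unimodal.

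First, combining Lemma \ref{skewlemma} with the definition \eqref{hermsp}, one obtains
\[C^{\ka\la}_{\mu\nu}(t) = \lang s_{\la/\mu}, s_{\nu/\ka}\rang_k,\]
where both sides vanish unless $|\la|-|\mu|=|\nu|-|\ka|=:d$. Since $s_{\la/\mu}, s_{\nu/\ka}\in\Lambda^\Z$ are fixed by the conjugation $t\mapsto t^{-1}$, Hermiticity of $\lang\,,\rang_k$ immediately gives $C^{\ka\la}_{\mu\nu}(t)=C^{\ka\la}_{\mu\nu}(t^{-1})$, which is the symmetry half of the claim.

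Next, I would equip $\Lambda\otimes\C(t)$ with an $\sltwo$-action whose Cartan grading records the $t$-grading. Formula \eqref{hermsp0} is suggestive: on single power sums,
\[\lang p_j,p_j\rang_k = j\bigl(t^{(1-k)j}+t^{(3-k)j}+\cdots+t^{(k-1)j}\bigr),\]
and the parenthetical factor is the character of $\mathrm{Sym}^{k-1}(\C^2)$ evaluated at $\mathrm{diag}(t^j,t^{-j})$. Accordingly, I would identify $\Lambda\otimes\C(t)$ with $\mathrm{Sym}\bigl(\bigoplus_{j\geq 1}U_j\bigr)$, where each $U_j\cong \mathrm{Sym}^{k-1}(\C^2)$ as an $\sltwo$-module with its weights scaled by $j$, so that the induced Hermitian form on the symmetric algebra matches $\lang\,,\rang_k$ and is $\sltwo$-invariant. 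Every element of $\Lambda^\Z$ then lies in the weight-zero subspace of some finite-dimensional $\sltwo$-submodule. The value $\lang s_{\la/\mu}, s_{\nu/\ka}\rang_k$ would finally be exhibited as $\mathrm{tr}(t^h\mid W_{\la\mu\nu\ka})$ for an honest finite-dimensional $\sltwo$-submodule $W_{\la\mu\nu\ka}$, constructed from the $\sltwo$-equivariant operators $s_\mu^*s_\la$ and $s_\ka^*s_\nu$ paired against the vacuum.

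The main obstacle is precisely this final identification: one must realize the value as the character of a single $\sltwo$-module rather than a difference of two, because only the former automatically yields nonnegative, and hence symmetric unimodal, coefficients. The cleanest way around this is to invoke Theorem B. Under the isomorphism $\Phi$, $C^{\ka\la}_{\mu\nu}(t)$ becomes the graded dimension of an explicit $\Hom$ space between projective modules in $A_k^{[n]}$-$\mbox{gmod}$. The Lefschetz $\sltwo$-action on $A_k\cong H^*(\mathbb{P}^{k-1})$ transports through the convolution bimodule structure to an honest $\sltwo$-action on this $\Hom$ space, and the hard Lefschetz theorem then delivers the symmetric unimodal decomposition directly. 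Either realization makes clear that Theorem A's enumerative formula, which establishes only the positivity of $c^{\ka\la}_{\mu\nu}(t)$, does not alone imply unimodality --- the essential input is the representation-theoretic origin of the $t$-grading.
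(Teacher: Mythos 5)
Your symmetry argument is fine and matches the paper's. The unimodality part, however, has two concrete problems. First, the proposed construction $\Lambda\otimes\C(t)\cong\mathrm{Sym}\bigl(\bigoplus_{j\geq 1}U_j\bigr)$ with $U_j\cong\mathrm{Sym}^{k-1}(\C^2)$ ``with its weights scaled by $j$'' does not produce $\sltwo$-modules: for $j\geq 2$ the Laurent polynomial $t^{(1-k)j}+t^{(3-k)j}+\cdots+t^{(k-1)j}$ has exponents spaced $2j$ apart and is not the character of any $\sltwo$-representation. Correspondingly, $\lang p_j,p_j\rang_k$ is \emph{not} unimodal (e.g.\ $\lang p_2,p_2\rang_2=2t^{-2}+0\cdot t^0+2t^2$), so no realization of $\lang\,,\rang_k$ as an $\sltwo$-invariant form on the full symmetric algebra can make ``the relevant subspaces'' into submodules; unimodality is genuinely a feature of the Schur basis, and your middle paragraph cannot be repaired as stated. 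Second, the fallback via Theorem B asserts that the Lefschetz $\sltwo$ on $H^*(\mathbb{P}^{k-1})$ ``transports through the convolution bimodule structure'' to the relevant spaces; this is exactly the point the paper explicitly leaves as a speculation after Theorem \ref{homspace}, and you supply no argument. (It can in fact be done for the spaces $\Hom(S^\mu,S^\nu)=e_\mu A_k^{[n]}e_\nu$, because the Lefschetz operator $\sum_i x_i$ on $A_k^{\otimes n}=H^*((\mathbb{P}^{k-1})^n)$ is $S_n$-invariant and hence the whole $\sltwo$ acts on isotypic multiplicity spaces; but one must say this, and one must also first reduce the general $C^{\ka\la}_{\mu\nu}(t)$ --- which is a Krull--Schmidt multiplicity, not literally a Hom-space dimension --- to a nonnegative combination of the $\lang s_\alpha,s_\beta\rang_k$ via Lemma \ref{skewlemma} and Schur-positivity of skew Schur functions. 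That reduction step is missing from your write-up.)

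For comparison, the paper stays entirely on the symmetric-function side and needs neither Theorem B nor hard Lefschetz: after the same reduction to $\lang s_\mu,s_\nu\rang_k$, it expands
$\lang s_\mu,s_\nu\rang_k=\sum_{\ka}a_{\ka\mu}\,s_\ka(t^{1-k},t^{3-k},\dots,t^{k-1})$
using the $GL(U)\times GL(V)$-decomposition of $\mathbb{S}_\nu(U\boxtimes V)$, and then quotes the classical symmetry and unimodality of the principal specialization $s_\ka(t^{1-k},\dots,t^{k-1})$ (Macdonald I.8, Ex.~4). Your $\sltwo$ philosophy is actually realized there, but at the level of $GL_k$: the specialization is the character of $\mathbb{S}_\ka(\mathrm{Sym}^{k-1}\C^2)$ under the principal $\sltwo\subset\mathfrak{gl}_k$, which is where a single honest finite-dimensional $\sltwo$-module enters. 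If you want to salvage your approach, aim the $\sltwo$-action at these $GL_k$-Schur functors (or at the isotypic spaces $e_\mu A_k^{[n]}e_\nu$), not at the whole ring $\Lambda$.
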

\begin{proof}

First, the symmetry statement follows from
lemma \ref{skewlemma}, and the obvious symmetry of \eqref{hermsp0}.

Since the sum of unimodal expressions is unimodal, it suffices to check the unimodality of $\lang s_\mu,s_\nu\rang_k$,
by lemma \ref{skewlemma}, and the Schur-positivity of the skew
Schur functions. We have
\[\lang s_\mu,s_\nu\rang_k= (s_\mu,t^{(1-k)d}\rho_{t^{2k},t^2} s_\nu)=
\sum_{\ka,\la} a_{\ka\la}s_\ka(t^{1-k},t^{3-k}...,t^{k-1})
(s_\mu,s_\la),\]
where $a_{\ka\la}$ are the multiplicities of the decomposition
into irreducibles
\[\mathbb{S}_\nu(U \boxtimes V) = \bigoplus_{\ka,\la}
a_{\ka\la} \mathbb{S}_\ka(U)\boxtimes \mathbb{S}_\la(V)\]
over $GL(U)\times GL(V)$. In particular, they are nonnegative integers.
The answer now follows from the unimodality of
$s_\ka(t^{1-k},t^{3-k}...,t^{k-1})$, 
see \cite{Mac} chapter I, section 8, example 4.

\end{proof}
\noindent

A notable feature of Proposition \ref{prop:unimod} is that neither symmetry 
nor unimodality is immediately clear from theorem \ref{tabthm}.
It would be interesting to give a purely enumerative proof
in this way, or a representation-theoretic proof, along the lines
of the next section.

\section{Categorification}
\label{catsection}
\subsection{Wreath products of $H^*(\mathbb{P}^{k-1})$}
Fix the integer $k \geq 1$, and let $A_k = H^*(\mathbb{P}^{k-1},\C) \cong \C[x]/x^k$.  Denote by $A_k^{[n]}$ the smash product
of $A_k$ with the group algebra of the symmetric group $S_n$,
$$
	 A_k^{[n]} = A_k \# \C[S_n].
$$
As a vector space, $A_k^{[n]} = A_k^{\otimes n} \otimes_\C \C[S_n]$, and by convention we take $A_k^{[0]} = \C$.  We give $A_k^{[n]}$ a grading by declaring the degree of $x$ to be 1, and putting $\C[S_n]$ in degree 0.

Let $K(A_k^{[n]})$ denote the Grothendieck group of the category of $\Z$-graded finitely-generated projective left $A_k^{[n]}$ modules.  The $\Z$-grading on $A_k^{[n]}$ endows $K(A_k^{[n]})$ with the structure of a free $\Z[t,t^{-1}]$ module, where shifting by $1$ in the internal grading of a module corresponds to multiplication by $t$ on the class in the Grothendieck group,
$$
	[M\{\pm 1\}] = t ^{\pm1}[M] \in K(A_k^{[n]}).
$$

If we choose a complete set of minimal idempotents $e_\lambda \in \C[S_n]$, so that $\{\C[S_n]e_\lambda\}_{\lambda \vdash n}$ are representatives of the isomorphism classses of irreducible $\C[S_n]$ modules, then $\{A_k^{[n]}e_\lambda\}$ are representatives of the isomorphism classes of indecomposable projective $A_k^{[n]}$ modules, up to grading shift.  It follows that the rank of $K(A_k^{[n]})$ as a free $\Z[t,t^{-1}]$ module is the number of partitions of $n$.  Thus the free $\Z[t,t^{-1}]$ module $K(A_k^{[n]})$
comes equipped with a canonical basis, namely, the classes $\{[A_k^{[n]}e_\lambda]\}_{\lambda \vdash n}$ of the indecomposable projective modules.

Let $\cF_k = \oplus_{n=0}^\infty K(A^{[n]})$.  Homomorphisms in the module category endow  $\cF$ with a bilinear form,
$$
	\langle [X],[Y] \rangle_{\cF_k} = gdim\ \Hom (X,Y)\in \Z[t,t^{-1}],
$$
where $gdim$ $V$ denotes the graded dimension of a $\Z$-graded vector space $V$.  By convention, the individual summands $K(A^{[n]})$ for different $n$ are orthogonal with respect to this bilinear form.  This form is \emph{semi-linear} with respect to $t$,
$$
	\langle t^{\pm 1}[M],[N] \rangle_{\cF_k} = t^{\pm 1} \langle [M],[N] \rangle_{\cF_k} = 
	 \langle [M],t^{\mp 1}[N] \rangle_{\cF_k}.
$$

The following Lemma can be checked easily using the fact that $A_k^{[n]}$ has a nondegerate trace $tr: A_k^{[n]} \longrightarrow \C$.

\begin{lemma}
With respect to the basis $\{[A_k^{[n]}e_\lambda]\}$, the matrix of the bilinear form $\langle \cdot,\cdot \rangle_{\cF_k}$ is symmetric.
\end{lemma}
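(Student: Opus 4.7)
The plan is to reinterpret the $\Hom$ space via induction from $\C[S_n]$ and then use self-duality of Specht modules to rewrite it in a form that is manifestly symmetric in $\lambda \leftrightarrow \mu$. Writing $A = A_k^{[n]}$, I would first identify the projective $P_\lambda := A e_\lambda$ with the induced module $A \otimes_{\C[S_n]} V_\lambda$, where $V_\lambda = \C[S_n] e_\lambda$ is a Specht module. Since $A \cong A_k^{\otimes n} \otimes \C[S_n]$ as a right $\C[S_n]$-module, this gives $P_\lambda \cong A_k^{\otimes n} \otimes V_\lambda$ as a graded vector space, with the diagonal $\C[S_n]$-action on the right side.

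Next, induction-restriction adjunction yields a graded isomorphism
\[\Hom_A(P_\lambda, P_\mu) \cong \Hom_{\C[S_n]}(V_\lambda, A_k^{\otimes n} \otimes V_\mu).\]
Using the self-duality $V_\lambda^* \cong V_\lambda$ of Specht modules, I would then rewrite this as
\[\Hom_{\C[S_n]}(V_\lambda, A_k^{\otimes n} \otimes V_\mu) \cong \left(V_\lambda \otimes A_k^{\otimes n} \otimes V_\mu\right)^{S_n}.\]
The right-hand side is manifestly symmetric in $\lambda$ and $\mu$ via the tensor-swap isomorphism $V_\lambda \otimes V_\mu \cong V_\mu \otimes V_\lambda$, which is $S_n$-equivariant and grading-preserving. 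Taking graded dimensions then yields the desired symmetry of the matrix of $\langle\cdot,\cdot\rangle_{\cF_k}$.

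The main subtlety is the self-duality of Specht modules invoked in the third step. This is a classical fact about complex representations of $S_n$, and is precisely where the hint about the nondegenerate trace enters the argument: the restriction of $\tr$ to $\C[S_n] \subset A_k^{[n]}$ is the standard symmetric trace $\sigma\tau \mapsto \delta_{\sigma\tau,e}$, and together with the anti-involution $\sigma \mapsto \sigma^{-1}$ of $\C[S_n]$ it furnishes the $S_n$-invariant nondegenerate pairing on $V_\lambda$ that identifies it with $V_\lambda^*$. An alternative route avoiding Frobenius reciprocity would be to extend the anti-involution $\sigma \mapsto \sigma^{-1}$ to a graded anti-involution $\tau$ of $A$ (via $\tau(a\sigma) = \sigma^{-1}(a)\sigma^{-1}$), choose the $e_\lambda$ to be $\tau$-fixed, and observe that $\tau$ restricts to a graded linear isomorphism $e_\lambda A e_\mu \xrightarrow{\sim} e_\mu A e_\lambda$; but the induction approach is cleaner because the symmetry is built in by construction rather than imposed by a choice of idempotent.
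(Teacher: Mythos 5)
Your argument is correct, and it takes a genuinely different route from the one the paper gestures at. The paper offers no written proof beyond the remark that the lemma ``can be checked easily using the fact that $A_k^{[n]}$ has a nondegenerate trace'': the intended argument identifies $\Hom(A e_\lambda, A e_\mu)$ with $e_\lambda A e_\mu$ and uses the trace to produce a perfect pairing $e_\lambda A e_\mu \otimes e_\mu A e_\lambda \to \C$. Note, however, that since the trace is concentrated in top degree $N = n(k-1)$, that pairing a priori only gives $(e_\lambda A e_\mu)_d \cong \left((e_\mu A e_\lambda)_{N-d}\right)^*$, i.e.\ symmetry of the matrix up to $t \mapsto t^{-1}$ and a shift; to deduce the literal equality of graded dimensions one still needs the palindromicity of $\mathrm{gdim}\, e_\lambda A e_\mu$. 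Your route via Frobenius reciprocity, $\Hom_A(P_\lambda,P_\mu) \cong \bigl(V_\lambda \otimes A_k^{\otimes n}\otimes V_\mu\bigr)^{S_n}$ with the grading carried entirely by the middle factor, together with self-duality of the irreducible $S_n$-modules over $\C$, gives the degree-by-degree symmetry directly and is arguably the cleaner proof of the statement as literally written. One small inaccuracy in your closing remarks: the restriction of the nondegenerate trace of $A_k^{[n]}$ to the degree-zero subalgebra $\C[S_n]$ is \emph{not} the standard trace $\sigma \mapsto \delta_{\sigma,e}$ --- for $k>1$ it vanishes identically on degree $0$, since the trace is supported in top degree. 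This does not affect your main argument, because the self-duality $V_\lambda^* \cong V_\lambda$ follows from the reality of symmetric-group characters independently of any trace on $A_k^{[n]}$; you should just cite that fact directly rather than routing it through the trace hint.
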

(The above symmetry together with the semi-linearity together can be thought of as defining a Hermitian inner product on $\cF_k$ after specialising $t$ to a point on the unit circle.)

\subsection{Bimodules}
The standard embeddings $S_m\subset S_n$, $m\leq n$ of small symmetic groups into larger ones give rise to embeddings of algebras
$A_k^{[m]} \subset A_k^{[n]}$.  Let $\lambda \vdash m$ a partition, and let $e_\lambda \in \C[S_m]$ be an associated minimal idempotent in the group algebra.  We view $e_\lambda$ as an element of $A_k^{[n]}$ for any $n>m$ via the embeddings $\C[S_m]\subset A_k^{[m]}\subset A_k^{[n]}$, and set 
$$
	P^\lambda = A_k^{[n]}e_\lambda \text{ and } Q^\lambda = e_\lambda A_k^{[n]}\{ m \}.
$$
The internal grading shift $\{ m \}$ in the definition of $Q^\lambda$ is for convenience; it ensures that various hom spaces occurring later have a grading which is symmetric about the origin.  The space $P^\lambda$ is naturally an $(A_k^{[n]},A_k^{[n-m]})$ bimodule, while the space $Q^\lambda$ is naturally a $(A_k^{[n-m]},A_k^{[n]})$ bimodule.  By convention, we set $P^\lambda = Q^\lambda = 0$ when $m>n$, and $P^\emptyset = Q^\emptyset = A_k^{[n]}$ as an $(A_k^{[n]},A_k^{[n]})$ bimodule.  

We denote tensor products of these bimodules by concatenation, where the tensor product is understood to be over the algebra $A_k^{[l]}$ acting on both sides of the tensor product.  So, for example, if $\lambda \vdash m$ and $\mu \vdash l$, then for each $n\geq \mbox{max}\{m,l\}$,
$$
	P^\lambda Q^\mu  = P^\lambda \otimes_{A_{k^{[n-m]}}} Q^\mu
$$
is an $(A_k^{[n]}, A_k^{[n-m+l]})$ bimodule.  On the other hand,
$$
	Q^\mu P^\lambda = Q^\mu \otimes_{A_k^{[n]}} P^\lambda
$$
is an $(A_k^{[n-l]},A_k^{[n-m]})$ bimodule.  

Denote by $1_0$ the trivial module over $A_k^{[0]}= \C$.  The $\Z[t,t^{-1}]$-module $\cF_k$ comes equipped with various natural bases indexed by partitions $\lambda = (\lambda_1,\lambda_2,\dots,\lambda_r) \vdash n$.  Examples include
\begin{enumerate}
\item $S^\lambda : = P^\lambda 1_0$ (This module was denoted $A_k^{[n]}e_\lambda$ in the previous subsection),
\item $E^\lambda := P^{(1^{\lambda_1})}P^{(1^{\lambda_2})}\dots P^{(1^{\lambda_r})} 1_0$, and
\item $H^\lambda := P^{(\lambda_1)} P^{(\lambda_2)} \dots P^{(\lambda_r)}1_0$.
\end{enumerate}

\subsection{The character map}
For the remainder of this paper, we consider the integral form $\Lambda_{t^k,t}^\Z$
of $\Lambda_{t^k,t}$; by definition $\Lambda_{t^k,t}^\Z$ is the free $\Z[t,t^{-1}]$-module spanned by the Schur functions.
We define a map of $\Z[t,t^{-1}]$ modules
$$
\Phi : \cF_k \longrightarrow \Lambda_{t^k,t}^\Z,  \ \ \Phi([S_\lambda] ) = s_\lambda,
$$	
by sending each canonical basis vector to the corresponding Schur function.  It is straightforward to check that
$\Phi([E^\lambda])$ is the elementary symmetric function $e_\lambda$, while $\Phi([H^\lambda])$ is the complete symmetric function $h_\lambda$.

%
%

The proof of the following theorem will be given at the end of this section.

\begin{theorem}\label{categorification}
The map $\Phi$ is an isometry with respect to 
$\lang\,, \rang_{\cF_k}$, $\lang\,, \rang_k$.
\end{theorem}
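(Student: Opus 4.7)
We outline a plan for proving the isometry. First, since $\Phi$ sends the canonical basis $\{[S^\la]\}$ of $\cF_k$ to $\{s_\la\}$ and (as noted in the preceding subsection) also sends $[H^\mu]\mapsto h_\mu$, and since $\{h_\mu\}$ is a $\Z[t,t^{-1}]$-basis of $\Lambda_{t^k,t}^\Z$ via the classical unitriangular Kostka change of basis from $\{s_\la\}$, the classes $\{[H^\mu]\}$ form a $\Z[t,t^{-1}]$-basis of $\cF_k$ and $\Phi$ is an isomorphism of free $\Z[t,t^{-1}]$-modules. By $\Z[t,t^{-1}]$-bilinearity and semi-linearity of both forms, it suffices to verify
\[
\langle[H^\mu],[H^\nu]\rangle_{\cF_k}\;=\;\langle h_\mu,h_\nu\rangle_k
\]
for all partitions $\mu,\nu$ of the same size.

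The plan is then to reduce both sides by a common inductive procedure based on adjunction. On the bimodule side, the pair $(P^{(a)},Q^{(a)})$ is biadjoint in the graded bimodule category up to the shift $\{a\}$ built into the definition of $Q^{(a)}$, giving a relation of the shape $\mathrm{gdim}\,\Hom(P^{(a)}X,Y)=t^{a}\cdot\mathrm{gdim}\,\Hom(X,Q^{(a)}Y)$. Iterating this peels off one row of $\mu$ at a time and rewrites the Hom as the graded dimension of a bimodule composition $Q^{(\mu_r)}\cdots Q^{(\mu_1)}P^{(\nu_1)}\cdots P^{(\nu_s)}1_0$. A parallel reduction on the symmetric-function side --- using the adjoint property $(s_a f,g)=(f,s_a^* g)$ of the Macdonald pairing together with the $t^{(1-k)d}$ twist and the $t\mapsto t^2$ substitution defining $\langle\cdot,\cdot\rangle_k$ --- performs the same reduction. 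Both reductions terminate at the common base case $\langle[1_0],[1_0]\rangle_{\cF_k}=1=\langle 1,1\rangle_k$.

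With this adjunction reduction in hand, the isometry reduces to matching the straightening rule for interchanging an innermost $Q^{(a)}$ with an outermost $P^{(b)}$. On the bimodule side, I would establish a direct-sum decomposition
\[
Q^{(a)}P^{(b)}\;\cong\;\bigoplus_{c,d}\; P^{(c)}Q^{(d)}\{\text{shift}\}^{\oplus m_{c,d}(t)}
\]
by a graded Mackey decomposition of the $(A_k^{[n-a]},A_k^{[n-b]})$-bimodule $e_{(a)}A_k^{[n]}e_{(b)}$ over the double cosets of the Young subgroups $S_a\times S_{n-a}$ and $S_b\times S_{n-b}$ in $S_n$: each double coset contributes a single $P^{(c)}Q^{(d)}$ summand, with graded multiplicity a product of Gaussian binomial coefficients in $t$ coming from the $A_k^{\otimes n}$-structure fixed by the corresponding stabilizer. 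On the symmetric-function side, the parallel straightening is the one-row case of Theorem~\ref{tabthm}, whose coefficients are already shown in Example~\ref{strip} to be precisely such Gaussian binomials.

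The main obstacle will be matching these two Gaussian-binomial computations: I must verify that the $t$-weights produced on the categorical side by the graded $A_k$-structure on each double-coset stabilizer coincide, once the grading conventions and the $t\mapsto t^2$ substitution implicit in $\langle\cdot,\cdot\rangle_k$ are reconciled with the conventions on $\cF_k$, with the one-row polynomials $c^{\ka\la}_{\mu\nu}(t)$ from Theorem~A. Once this compatibility is established, the two inductive reductions agree term by term and the isometry follows.
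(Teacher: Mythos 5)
Your proposal is correct and follows essentially the same route as the paper: reduce the isometry to the $\{[H^\mu]\}$/$\{h_\mu\}$ basis, peel off rows by adjunction, and match the straightening rule $Q^{(a)}P^{(b)}\cong\bigoplus_l P^{(b-l)}Q^{(a-l)}$ with quantum-binomial multiplicities against the one-row case of Theorem~\ref{tabthm} via Example~\ref{strip} and the normalization \eqref{hermsp}. The only difference is that where you propose a graded Mackey/double-coset decomposition of $e_{(a)}A_k^{[n]}e_{(b)}$, the paper simply cites \cite[Proposition 2]{CL1} for that commutation relation, which is proved there by exactly such an argument.
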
 

The above theorem translates to the statement that the module categories for the algebras $A_k^{[n]}$ for all $n$ together categorify Macdonald's ring of symmetric functions at $q=t^k$.  When $k = 1$, the map $\Phi$ is just the Frobenius character map, and the above theorem is of course well known.  Note that $\Lambda_{t,t}$ is the ring of symmetric functions (over $\Z[t,t^{-1}]$) endowed with a  bilinear form with respect to which the Schur functions $\{s_\lambda\}$ are orthonormal basis.  On the other hand, $A_1^{[n]}=\C[S_n]$ is a semi-simple algebra, whence the classes of indecomposable projective (irreducible) modules give an orthonormal basis in the Grothendieck group.

\subsection{Representation-theoretic interpretation of $c^{\kappa\lambda}_{\mu\nu}(t)$}

As a consequence of Theorem \ref{categorification}, the generalized Littlewood-Richardson coefficients $c^{\kappa\lambda}_{\mu\nu}(t)$ inherit an interpretation as the graded dimension of vector spaces arising in the representation theory of the algebras $A_k^{[n]}$.  To explain this interpretation, we recall the following.

\begin{proposition}
The bimodules $P^\lambda Q^\mu$ are indecomposable.  Any bimodule of the form $Q^\alpha P^\beta$ decomposes as a direct sum of (graded shifts of) the indecomposable bimodules $\{P^\lambda Q^\mu\}_{\lambda, \mu}$. 
\end{proposition}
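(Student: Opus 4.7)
The plan is to reduce both assertions to Theorem A via the categorification isomorphism $\Phi$ of Theorem \ref{categorification}, and then to upgrade the resulting Grothendieck identity to an actual bimodule decomposition by a graded Krull--Schmidt argument. My first step would be to verify that the bimodules $P^\lambda$ and $Q^\mu$ implement, at the level of Grothendieck groups, multiplication by the Schur function $s_\lambda$ and by its adjoint $s_\mu^*$ respectively: for any $A_k^{[r]}$-module $M$,
\[
\Phi\bigl([P^\lambda \otimes_{A_k^{[r]}} M]\bigr) = s_\lambda\,\Phi([M]),\qquad \Phi\bigl([Q^\mu \otimes_{A_k^{[r+l]}} M]\bigr) = s_\mu^*\,\Phi([M]).
\]
In the base case $\lambda = \mu = (1)$ this is a direct unwinding of induction and restriction between $A_k^{[r]}$ and $A_k^{[r+1]}$, a Frobenius-type character calculation; the general case follows by cutting out the primitive idempotents $e_\lambda$ and $e_\mu$ from iterated compositions of $P^{(1)}$'s and $Q^{(1)}$'s. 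Consequently $[P^\lambda Q^\mu]$ realizes the operator $s_\lambda s_\mu^*$, and Theorem A gives the Grothendieck identity
\[
[Q^\alpha P^\beta] \;=\; \sum_{\lambda,\mu} c^{\mu\beta}_{\lambda\alpha}(t)\,[P^\lambda Q^\mu],
\]
with coefficients in $\Z_{\geq 0}[t,t^{-1}]$.

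For the indecomposability of $P^\lambda Q^\mu$, the strategy is to compute its graded endomorphism ring and show it is graded local. Applying tensor--Hom adjunction gives
\[
\End_{\text{bimod}}(P^\lambda Q^\mu) \;\cong\; e_\lambda\bigl(A_k^{[n]} e_\lambda \otimes_{A_k^{[n-m]}} e_\mu A_k^{[n-m+l]}\bigr) e_\mu,
\]
which is an idempotent-truncated expression inside the wreath product. Its graded dimension can independently be extracted from the isometry of Theorem \ref{categorification} as a Macdonald-type pairing of $s_\lambda s_\mu^*$ with itself. My aim would be to show that the resulting Laurent polynomial has constant term $1$ and is otherwise supported in strictly positive degrees, which forces $\End(P^\lambda Q^\mu)$ to be graded local, hence $P^\lambda Q^\mu$ to be indecomposable in the category of graded bimodules.

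With indecomposability established, the decomposition of $Q^\alpha P^\beta$ then follows from the graded Krull--Schmidt theorem applied to the Grothendieck identity above: since each $P^\lambda Q^\mu$ is indecomposable with one-dimensional degree-zero endomorphism space, the positivity of the coefficients $c^{\mu\beta}_{\lambda\alpha}(t)$ forces the class identity to lift to an honest direct-sum decomposition of graded bimodules, in which the graded multiplicity of $P^\lambda Q^\mu$ is precisely $c^{\mu\beta}_{\lambda\alpha}(t)$.

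The main obstacle will be the endomorphism-ring calculation for $P^\lambda Q^\mu$, and in particular the vanishing of negative-degree endomorphisms. This requires careful control over how the nilpotent generator $x \in A_k = \C[x]/x^k$ interacts with the primitive idempotents $e_\lambda$ and $e_\mu$ across the tensor product over the intermediate algebra $A_k^{[n-m]}$, and is the step where the non-semisimple structure of $A_k^{[n]}$ for $k \geq 2$ enters essentially.
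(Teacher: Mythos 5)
Your plan for the first assertion --- showing that $\End(P^\lambda Q^\mu)$ is non-negatively graded with one-dimensional degree-zero piece, hence graded local --- is exactly the paper's route (the paper defers the computation to \cite[Proposition 6]{CL1}); you rightly flag this endomorphism computation as the hard step, and like the paper you do not actually carry it out, so that part of the proposal is an accurate outline rather than a proof.

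For the second assertion there is a genuine gap. You derive the identity $[Q^\alpha P^\beta] = \sum_{\lambda,\mu} c^{\mu\beta}_{\lambda\alpha}(t)\,[P^\lambda Q^\mu]$ as an identity of induced operators on $\cF_k$ (via Theorem A and the isometry $\Phi$), and then claim that positivity of the coefficients together with graded Krull--Schmidt ``lifts'' this to an honest direct-sum decomposition of bimodules. Krull--Schmidt only guarantees that $Q^\alpha P^\beta$ has an essentially unique decomposition into \emph{some} indecomposables; it says nothing about which indecomposables occur. Two non-isomorphic bimodules can induce the same operator on the Grothendieck group, so an operator identity with nonnegative coefficients is consistent with $Q^\alpha P^\beta$ having exotic indecomposable summands whose classes merely add up correctly. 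To rule this out you need a structural argument on the bimodule itself: writing $Q^\alpha P^\beta \cong e_\alpha A_k^{[n]} e_\beta$ (up to shift), one decomposes $A_k^{[n]}$ as a bimodule over the two parabolic subalgebras via double cosets of symmetric groups (a Mackey-type decomposition), checks that each double-coset piece is of $PQ$ type, and only then cuts by the idempotents $e_\alpha$, $e_\beta$. This is precisely the content the paper imports from \cite[Proposition 6]{CL1}. (Your use of Theorem \ref{categorification} is not circular, since its proof does not rely on this proposition, but it supplies only $K$-theoretic information, which is strictly weaker than what the statement asserts.)
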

\begin{proof}
As explained in, for example,  \cite[Proposition 6]{CL1}, it is straightforward to check that $\End( P^\lambda Q^\mu)$ is a non-negatively graded algebra whose degree 0 piece is one-dimensional.  Thus $P^\lambda Q^\mu$ is indecomposable.  The fact that all indecomposable bimodules are of the form $P^\lambda Q^\mu$ also follows just as in \cite[Proposition 6]{CL1}.
\end{proof}

For partitions $\kappa,\lambda,\mu,\nu$, we may therefore define a $\Z$ graded vector $C^{\kappa\lambda}_{\mu\nu}$ as the multiplicity space of $P^\mu Q^\kappa$ in the decomposition of $Q^\nu P^\lambda$ into indecomposable bimodules:

$$
	Q^\nu P^\lambda = \bigoplus_{\mu,\kappa} P^\mu Q^\kappa \otimes_\C C^{\kappa\lambda}_{\mu\nu}.
$$

\begin{theorem}\label{homspace}
The graded dimension of $C^{\kappa\lambda}_{\mu\nu}$ is equal to the generalized Littlewood-Richardson coefficient 
$C^{\ka\la}_{\mu\nu}(t)$.
\end{theorem}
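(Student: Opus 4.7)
The strategy is to upgrade Theorem \ref{categorification} from an isometry of inner product spaces to a compatibility with bimodule actions, and then to invoke Theorem \ref{tabthm}.

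I would first establish that under $\Phi$, the endofunctor ``tensor on the left with $P^\la$'' corresponds to multiplication by $s_\la$ on $\Lambda_{t^k,t}^\Z$, and ``tensor with $Q^\nu$'' corresponds to the adjoint operator $s_\nu^*$. There are two natural ways to see this. The direct route uses the already-verified formula $\Phi([H^\la]) = h_\la$, which says that iterated multiplication by the classes $[P^{(m)}]$ realizes multiplication by $h_m$; since the $h_m$ generate $\Lambda$ as an algebra, the claim extends to all $[P^\la]$ by combining the Jacobi--Trudi formula with the known identity $\Phi([P^\la\cdot 1_0]) = s_\la$. A more structural route uses the biadjunction between $P^\la\otimes_{A_k^{[n-m]}}-$ and $Q^\la\otimes_{A_k^{[n]}}-$, which follows from the nondegenerate trace on $A_k^{[n]}$ featured in the lemma of the previous subsection; the isometry property of $\Phi$ transports this biadjunction to the categorical adjunction between multiplication by $s_\la$ and $s_\la^*$ with respect to $\lang\,,\rang_k$, and uniqueness of adjoints then forces $Q^\la \leftrightarrow s_\la^*$ once $P^\la \leftrightarrow s_\la$ is known.

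With this identification in hand, passing to the Grothendieck group in the bimodule decomposition
\[Q^\nu P^\la = \bigoplus_{\mu,\ka} P^\mu Q^\ka \otimes_\C C^{\ka\la}_{\mu\nu}\]
and applying both sides to an arbitrary class in $\cF_k$ gives the operator identity
\[s_\nu^* s_\la = \sum_{\mu,\ka} \bigl(\text{gdim}\, C^{\ka\la}_{\mu\nu}\bigr)(t)\, s_\mu s_\ka^*\]
on $\Lambda_{t^k,t}^\Z$. Theorem \ref{tabthm}, recast in the semi-linear form \eqref{hermsp} via $C^{\ka\la}_{\mu\nu}(t) = t^{(1-k)(|\la|-|\mu|)}c^{\ka\la}_{\mu\nu}(t^2)$, furnishes a parallel expansion of the very same operator. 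Since the products $\{s_\mu s_\ka^*\}$ are linearly independent---they form a basis of the Heisenberg double $\mathcal{H}_\Lambda$ of operators on $\Lambda$---comparing coefficients yields $\text{gdim}\, C^{\ka\la}_{\mu\nu} = C^{\ka\la}_{\mu\nu}(t)$, as desired.

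The principal obstacle is the first step: promoting the Grothendieck-group isomorphism $\Phi$ to an operator-level statement. One must track grading shifts carefully---in particular, the convention $\{m\}$ in the definition of $Q^\la$ has to be exactly the one that aligns $Q^\la$ with $s_\la^*$ rather than with some grading-shifted variant, given the twist $t^{(1-k)d}$ appearing in $\lang\,,\rang_k$. Once this bookkeeping is handled, the remainder of the argument is largely formal.
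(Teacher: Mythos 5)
Your proposal is correct and follows essentially the same route as the paper, whose own proof of Theorem \ref{homspace} is simply the one-line observation that it ``follows immediately from Theorems \ref{tabthm} and \ref{categorification}, and the normalization \eqref{hermsp}.'' Your write-up is a faithful expansion of exactly that argument: you transport the bimodule decomposition to an operator identity on $\Lambda_{t^k,t}^\Z$ via the intertwining $\Phi\circ[P^\la]=s_\la\cdot\Phi$, $\Phi\circ[Q^\nu]=s_\nu^*\circ\Phi$ (which the paper leaves implicit in its proof of Theorem \ref{categorification}), and then compare coefficients against Theorem \ref{tabthm} in the normalization \eqref{hermsp}, correctly flagging the grading-shift bookkeeping as the only nontrivial point.
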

\begin{proof}
This follows immediately from Theorems \ref{tabthm} and 
\ref{categorification}, and the normalization \eqref{hermsp}. 
\end{proof}

In light of Proposition \ref{prop:unimod} and Theorem \ref{homspace} above, it is tempting to speculate that the graded vector space $C^{\kappa\lambda}_{\mu\nu}$ can be endowed with a linear action of the Lie algebra $\sltwo$ in a way that aligns the weight space decomposition with the grading; this would give a more conceptual explanation of the symmetry and unimodality of these coefficients.
\
\subsection{Proof of Theorem \ref{categorification}}
We now give the proof of Theorem \ref{categorification}.  For each $n\geq m$ the $(A_k^{[n]},A_k^{[n-m]})$ bimodule $P^\lambda$ is flat, as is the bimodule $Q^\lambda$.  Summing over $n$, we have indued endomorphisms of the Grothendieck group
$$
	[P^{\lambda}], [Q^{\lambda}] : \cF_k \longrightarrow \cF_k,
$$

The proof of the following proposition is given in \cite{CL1}.  That reference was concerned with the particular case $k=2$, although the proof carries over with easy modification to general $k\geq1$.

\begin{proposition}\cite[Proposition 2]{CL1} \label{prop:Grothendieck} The operators $[P^{\lambda}], [Q^{\lambda}]$ satisfy the following properties.
\begin{enumerate}
\item $[P^{\lambda}]\circ[P^{\mu}] = \sum_\nu d_{\lambda\mu}^\nu [P^{\nu}]$, where $d_{\lambda\mu}^\nu\geq 0$ is the (ordinary) Littlewood-Richardson coefficient.
\item $[Q^{(n)}]\circ [P^{(m)}]
 = \sum_{l\geq 0} {{k+l-1}\choose{l}}_t [P^{(m-l)}]\circ [Q^{(n-l)}],$ where ${x\choose y}_t$ is the quantum binomial coefficient.
  \end{enumerate}
\end{proposition}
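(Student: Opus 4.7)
The proof of each identity proceeds by computing the relevant bimodule tensor product and decomposing it into indecomposables, with the multiplicities arising from classical symmetric-group representation theory combined with graded dimensions of symmetric powers of $A_k$.

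For identity (1), I would begin by observing
\[P^\lambda\otimes_{A_k^{[n-m]}}P^\mu \;=\; A_k^{[n]}e_\lambda\otimes_{A_k^{[n-m]}}A_k^{[n-m]}e_\mu \;=\; A_k^{[n]}e_\lambda e_\mu,\]
using that $e_\lambda\in\C[S_m]$ (acting on indices $1,\dots,m$) and $e_\mu\in\C[S_l]$ (acting on indices $m+1,\dots,m+l$) sit in disjoint subgroups inside $S_n$ and so commute. Since the grading is concentrated on the $A_k^{\otimes n}$-factor while both idempotents live in $\C[S_n]$, the decomposition reduces via the splitting $A_k^{[n]}=A_k^{\otimes n}\otimes\C[S_n]$ to the classical identity
\[\C[S_{m+l}]e_\lambda e_\mu \;\cong\; \mathrm{Ind}^{S_{m+l}}_{S_m\times S_l}(V_\lambda\boxtimes V_\mu) \;\cong\; \bigoplus_\nu\bigl(\C[S_{m+l}]e_\nu\bigr)^{\oplus d^\nu_{\lambda\mu}}\]
provided by the ordinary Littlewood--Richardson rule. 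Tensoring with $A_k^{\otimes(m+l)}$ and then inducing up to $A_k^{[n]}$ preserves this decomposition, while the right $A_k^{[n-m-l]}$-action (on positions outside the first $m+l$) commutes with everything, giving the bimodule isomorphism $P^\lambda P^\mu\cong\bigoplus_\nu(P^\nu)^{\oplus d^\nu_{\lambda\mu}}$ with trivial grading shifts.

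For identity (2), I would compute
\[Q^{(n)}\otimes_{A_k^{[N]}}P^{(m)} \;=\; e_{(n)}A_k^{[N]}e_{(m)}\{n\}\]
as an $(A_k^{[N-n]},A_k^{[N-m]})$-bimodule via a Mackey-style decomposition of $A_k^{[N]}$ along the $(S_n\times S_{N-n})$--$(S_m\times S_{N-m})$ double cosets in $S_N$, classically indexed by the overlap integer $l$ with $\max(0,n+m-N)\le l\le\min(n,m)$. For a standard representative $\sigma_l$ which is the identity outside a region of size $m+n-l$, the corresponding summand of $A_k^{[N]}$ decomposes by tensor position into four pieces: an overlap of $l$ positions on which the left and right symmetrizers act diagonally, producing the graded vector space $\Sym^l(A_k)$; a region of size $m-l$ symmetrized only on the right (contributing to the $P^{(m-l)}$-factor); a region of size $n-l$ symmetrized only on the left (contributing to the $Q^{(n-l)}$-factor); and the remaining free positions that carry the residual bimodule actions. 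After tracking how $\sigma_l$ permutes tensor positions, the $l$-th summand becomes isomorphic to $\Sym^l(A_k)\otimes_\C P^{(m-l)}\otimes_{A_k^{[N-m-n+l]}}Q^{(n-l)}$ up to a grading shift which cancels cleanly against the normalization $\{n\}$ on $Q^{(n)}$ on the left hand side and the $\{n-l\}$ built into $Q^{(n-l)}$ on the right.

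The final ingredient is the graded-dimension computation
\[\dim_t\Sym^l(A_k) \;=\; h_l(1,t,\dots,t^{k-1}) \;=\; {k+l-1\choose l}_t,\]
the principal specialization of the complete symmetric function, which matches the coefficient in the statement exactly. Summing over $l$ and passing to the Grothendieck group yields the claim. The main obstacle is to verify the bimodule identification of each double-coset summand with $P^{(m-l)}Q^{(n-l)}$ as a \emph{graded} bimodule rather than merely matching on dimensions; this requires patient tracking of how $\sigma_l$ reshuffles the $A_k^{\otimes N}$ tensor factors and a careful check that all grading contributions beyond the $\Sym^l(A_k)$ multiplicity space are precisely absorbed into the normalization shifts on the $Q$-bimodules.
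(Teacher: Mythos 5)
The paper offers no proof of this proposition: it cites \cite[Proposition 2]{CL1} and remarks that the $k=2$ argument there carries over to general $k$. Your reconstruction --- the idempotent computation $P^\lambda P^\mu=A_k^{[n]}e_\lambda e_\mu$ reduced to ordinary induction from $S_m\times S_l$ for part (1), and the double-coset (Mackey) decomposition of $e_{(n)}A_k^{[N]}e_{(m)}$ with multiplicity space $\mathrm{Sym}^l(A_k)$ for part (2) --- is precisely that argument, correctly adapted. The one place you are too quick is the final identification: $h_l(1,t,\dots,t^{k-1})$ is the \emph{unbalanced} quantum binomial, while the proposition uses the balanced normalization (e.g.\ $\binom{4}{2}_t=t^{-4}+t^{-2}+2+t^2+t^4$, a Laurent polynomial in $t^{\pm 2}$), and no overall grading shift converts one into the other; the match requires the cohomological grading convention of \cite{CL1} (where $\deg x=2$, consistent with the $t^2$ appearing in the form \eqref{hermsp}) together with the centering shifts on the $Q$'s, so this normalization bookkeeping deserves an explicit check rather than the assertion that it ``cancels cleanly.''
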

\noindent
In the above proposition, the quantum binomial coefficients are normalized to be symmetric about the origin, e.g.
${{4}\choose{2}}_t = t^{-4} + t^{-2} + 2 + t^2 + t^4$.

Now, considering symmetric functions instead of Grothendieck groups, we define endomorphisms 
$$
p^\lambda, q^\lambda : \Lambda_{t^k,t} \longrightarrow \Lambda_{t^k,t}
$$
by letting $p^\lambda$ be multiplication by the Schur function $s_\lambda$ and letting $q^\lambda$ be its adjoint with respect to $\langle,\rangle_{t^k,t}$.  

\begin{proposition}\label{prop:symmetric}
The operators $p^\lambda, q^\lambda$ satisfy the following properties.
\begin{enumerate}
\item $p^\lambda \circ p^\mu= \sum_\nu d_{\lambda\mu}^\nu p^\nu$, where $d_{\lambda\mu}^\nu\geq 0$ is the (ordinary) Littlewood-Richardson coefficient.
\item $q^{(n)}\circ p^{(m)} 
 = \sum_{l\geq 0} {{k+l-1}\choose{l}}_t p^{(m-l)}q^{(n-l)}$.
  \end{enumerate}
\end{proposition}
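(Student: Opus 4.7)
My plan is to derive Proposition \ref{prop:symmetric} as a direct operator-level translation of the combinatorial results proved in Section \ref{combsection}. Both parts reduce to substituting the appropriate case of Theorem \ref{tabthm} and Example \ref{strip} and reading off the answer.

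For part (1), I would simply observe that $p^\la \circ p^\mu$ is by definition multiplication by $s_\la s_\mu$. The classical Littlewood-Richardson rule gives $s_\la s_\mu = \sum_\nu d^\nu_{\la\mu} s_\nu$, so the operator equals $\sum_\nu d^\nu_{\la\mu} p^\nu$. No $t$-dependence enters here, since multiplication in $\Lambda_{t^k,t}$ specialises away from the inner product.

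For part (2), I would apply Theorem \ref{tabthm} with $\la=(m)$ and $\nu=(n)$. By the definitions of $p^{(m)}$ and $q^{(n)}$, this gives
$$q^{(n)}\circ p^{(m)} \;=\; s^*_{(n)} s_{(m)} \;=\; \sum_{\mu,\ka} c^{\ka,(m)}_{\mu,(n)}(t)\, p^\mu\, q^\ka.$$
The containment conditions $\mu\subset(m)$ and $\ka\subset(n)$ implicit in Theorem \ref{tabthm} force $\mu=(m-l)$ and $\ka=(n-l)$ for a single index $l\geq 0$ (with $l\leq\min(m,n)$), so only one family of terms survives in the double sum. The value of the surviving coefficient $c^{(n-l),(m)}_{(m-l),(n)}(t)$ is then given explicitly by the stars-and-bars computation in Example \ref{strip} as the polynomial quantum binomial $\prod_{i=1}^{l}(1-t^{k+l-i})/(1-t^i)$.

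The only genuine bookkeeping step is to reconcile the normalisation of the quantum binomial with Proposition \ref{prop:Grothendieck}, which uses the symmetrised form (e.g.\ $\binom{4}{2}_t = t^{-4}+t^{-2}+2+t^2+t^4$) dictated by the Hermitian structure on the Grothendieck group. The passage between the two normalisations is precisely the rescaling $C^{\ka\la}_{\mu\nu}(t) = t^{(1-k)(|\la|-|\mu|)}\, c^{\ka\la}_{\mu\nu}(t^2)$ coming from the modified form $\lang\,,\,\rang_k$ defined in \eqref{hermsp}, which is the form actually relevant to the isometry of Theorem \ref{categorification}. Since the binomial in question is already known to be symmetric and unimodal (as a special case of Proposition \ref{prop:unimod}, or directly), this change of variable yields the claimed formula. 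I do not anticipate any substantive obstacle: the entire proposition is essentially a restatement of Example \ref{strip} in operator form, with Theorem \ref{tabthm} playing the role of ``universal template'' for writing $s^*_\nu s_\la$ as a sum of $s_\mu s^*_\ka$.
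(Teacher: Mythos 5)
Your proposal is correct and follows essentially the same route as the paper, whose proof of Proposition \ref{prop:symmetric} simply declares part (1) clear and derives part (2) from Example \ref{strip} together with the renormalization \eqref{hermsp}. You have merely spelled out the intermediate steps (invoking Theorem \ref{tabthm} to restrict the surviving shapes and converting the Gaussian binomial of Example \ref{strip} to its symmetrized form), which is exactly what the paper's terse argument intends.
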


\begin{proof}
The first statement is clear.  
The second follows from example \ref{strip} and \eqref{hermsp}.
\end{proof}

Now Propositions \ref{prop:Grothendieck} and \ref{prop:symmetric} imply the theorem.  For, the inner product $\langle [H^\lambda] , [H^\mu] \rangle_{\cF_k}  $ can be computed as
$$
	 \langle [P^{(\lambda_1)}] [P^{(\lambda_2)}]\dots [P^{(\lambda_r)}] 1_0,
	[P^{(\mu_1)}] [P^{(\mu_2)}] \dots[P^{(\mu_s)}] 1_0 \rangle_{\cF_k} = $$
$$
	\langle [Q^{(\mu_1)}][P^{(\lambda_1)}] [P^{(\lambda_2)}]\dots [(P^{(\lambda_r)}] 1_0,
	[P^{(\mu_2)}] \dots[P^{(\mu_s)}] 1_0 \rangle_{\cF_k}
$$
where the last equality used the adjointness of $P^{(\mu_1)}$ and $Q^{(\mu_1)}$.
Now we use the second part of Proposition \ref{prop:Grothendieck} to write $Q^{(\mu_1)}[P^{(\lambda_1)}]  [P^{(\mu_2)}] \dots[P^{(\mu_s)}] 1_0$
as a sum of $[H^\kappa]$s for smaller $\kappa$, inductively determining the $\langle [H^\lambda] , [H^\mu] \rangle_{\cF_k}  $ in terms of inner products involving smaller partitions.

Similarly, the inner product 
$$
	\langle h_\lambda ,h_\mu\rangle_{k} = \langle p^{\lambda_1} p^{\lambda_2}\dots p^{\lambda_r}1_0,p^{\mu_1}p^{\mu_2}\dots p^{\mu_s}1_0 \rangle_{k}
$$
can be computed using the adjointness of $p^{(\mu_1)}$ and $q^{(\mu_1)}$, together with the second part of Proposition \ref{prop:symmetric}.
Since the structure constants in Propositions \ref{prop:Grothendieck} and \ref{prop:symmetric} agree, we conclude by induction that
$$
	\langle [H_\lambda],[H_\mu] \rangle_{\cF_k} = \langle h_\lambda, h_\mu \rangle_{k} = 
	\langle \Phi([H_\lambda]),\Phi([H_\mu]) \rangle_{k},
$$
as desired.

\end{document}